 \newtheorem{theorem}{Theorem}[section]
 \newtheorem{lemma}[theorem]{Lemma}
 \newtheorem{proposition}[theorem]{Proposition}
\theoremstyle{definition}
\newtheorem{observation}[theorem]{Observation}
\theoremstyle{remark}
\newtheorem{rem}[theorem]{Remark}
\newtheorem*{theorem*}{Theorem}
 \newtheorem*{corollary*}{Corollary}
\newtheorem{fact*}{Fact}
\newcommand\dd{\mathrm d}
\newcommand{\hilbert}{\mathcal{H}}
\newcommand{\K}{\mathcal{K}}
\newcommand{\Z}{\mathbb{Z}}
\newcommand{\T}{\mathbb{T}}
\newcommand{\D}{\mathbb{D}}
\newcommand{\C}{\mathbb{C}}
\newcommand{\R}{\mathbb{R}}
\newcommand{\cc}[1]{\overline{#1}}
\newcommand{\abs}[1]{\left\vert#1\right\vert}
\newcommand{\norm}[1]{\left\Vert#1\right\Vert}
\newcommand{\ip}[2]{\left\langle #1, #2 \right\rangle}
\newcommand{\ad}{^\ast}
\newcommand{\inv}{^{-1}}
\newcommand{\til}{\raise.17ex\hbox{$\scriptstyle\mathtt{\sim}$}}
\newcommand\beq{\begin{equation}}
\newcommand\eeq{\end{equation}}
\newcommand\red{\color{red}}
\newcommand\bbm{\begin{bmatrix}}
\newcommand\ebm{\end{bmatrix}}
\newcommand\bpm{\begin{pmatrix}}
\newcommand\epm{\end{pmatrix}}
\numberwithin{equation}{section}
\newlength{\Mheight}
\newlength{\cwidth}
\newcommand{\dfn}[1]{{\bf #1}\index{#1}}
\newcommand{\Hphi}{\hilbert_\phi}
\newcommand{\HKo}{\hilbert(K_1^{\max})}
\newcommand{\HKt}{\hilbert(K_2^{\min})}
\newcommand{\Sone}{S_1^{\max}}
\newcommand{\ktwo}{k_{2, w}^{\min}}
\newcommand{\kone}{k_{1, w}^{\max}}
\newcommand{\Kone}{K_1^{\max}}
\newcommand{\Ktwo}{K_2^{\min}}
\newcommand{\Mone}{M_{z_1}}
\newcommand{\Mtwo}{M_{z_2}}
\newcommand{\Madone}{M\ad_{z_1}}
\newcommand{\Madtwo}{M\ad_{z_2}}
\newcommand{\id}{1}
\newcommand{\TFR}{\text{TFR}}
\newcommand{\TFRs}{\text{TFRs}}
\title[Concrete Realizations and the MCC]{Analytic continuation of concrete realizations and the McCarthy Champagne conjecture}
\author{Kelly Bickel}
\author{J. E. Pascoe} 
\author{Ryan Tully-Doyle}
\date{\today}
\thanks{Bickel was supported in part by NSF-DMS Analysis grant \#2000088. Pascoe was supported in part by NSF-DMS Mathematical Science Postdoctoral Research Fellowship  \#1606260 and NSF-DMS Analysis grant 
\#1953963.}
\begin{document}

\begin{abstract}
In this paper, we give formulas that allow one to move between transfer function type realizations of multi-variate Schur, Herglotz and Pick functions, without adding additional singularities
except perhaps poles coming from the conformal transformation itself. In the two-variable commutative case, we use  a canonical  de Branges-Rovnyak model theory to
 obtain concrete realizations that analytically continue through the boundary for inner functions which are rational in one of the variables
(so-called \emph{quasi-rational functions}). We then establish a positive solution to McCarthy's Champagne conjecture for local to global matrix monotonicity in the settings of both two-variable quasi-rational functions and $d$-variable perspective functions. 
\end{abstract}
\maketitle
\tableofcontents 
\section{Introduction}
\subsection{Overview} 
Colloquially, realizations are ways of representing structured classes of functions using operators on a Hilbert space; these bridges between rich operator-theoretic results and concrete function theory have led to a myriad of important breakthroughs. Classic realizations-type formulae include the Nevanlinna representations for \textbf{Pick functions} (holomorphic functions mapping the upper half plane $\Pi$ to $\overline{\Pi}$) and the transfer function realizations for \textbf{Schur functions} (holomorphic functions mapping the unit disk $\mathbb{D}$ to $\overline{\mathbb{D}}$). 

In \cite{ag90}, J. Agler extended such one-variable formulae from systems engineering into functional analysis in several variables; this heralded in a period of rapid development for function theory on the bidisk $\mathbb{D}^2$ and polydisk $\mathbb{D}^d$, including extensions of Pick interpolation, the infinitesimal Schwarz lemma, L\"owner's theorem, and the Julia-Carath\'eodory theorem \cite{ag1, kn07, amy10a, amyloew, pascoelownote}. Realization theory has also been extended to noncommutative functional analysis, an area that has seen an explosion of activity in the last decade. Specifically,  J. Williams developed a realization theory in the free probability setting in \cite{will13}. In the free analysis setting, realizations for free Pick functions were developed in \cite{pptd16, ptd16}, which is part of a large body of recent and ongoing work in various noncommutative contexts \cite{bsv04, akv2006, bgm2005, knese08, pop89, po95, muhlysolel2004, muhlysolel2005, bbfh2009, bbf2007}. As in the commutative case, these realizations can be used to generalize classical theorems of complex analysis to functions of several noncommuting variables.
	
This paper investigates three foundational questions that one can ask about general realizations:
\begin{itemize}
\item[Q1:] When do the regularity properties of a realization exactly mimic those of the represented function?
\item[Q2:] How does one move between realization formulae without sacrificing fine behavior?
\item[Q3:] Are there settings where realizations possess identifiable concrete formulae?	
\end{itemize}	
In this paper, we use functional analysis on the bidisk to answer (Q1) and (Q3) for classes of two-variable Schur functions. We also develop a more general algebraic approach to (Q2), which yields a chain of operator expressions that relates Schur, Herglotz, and Pick-type structures and is applicable to the noncommutative setting. We then provide applications in the context of several variable functional analysis.

\subsection{Background} To motivate this investigation, consider the one-variable setting, and recall that Pick functions $f: \Pi \rightarrow \overline{\Pi}$
	can be written uniquely in the following form, called a \textbf{Nevanlinna representation},
		$$f(z) = a + bz + \int_{\mathbb{R}} \frac{1+tz}{t-z}\dd \mu(t)$$
	for some $a \in \mathbb{R}, b\in \mathbb{R}^{\geq 0},$ and $\mu$ a positive finite Borel measure on $\mathbb{R}$ \cite{nev22, lax02}.
	The complement of the support of $\mu$ is exactly the set where $f$ analytically continues to be real valued, and thus through the real line via
	the Schwarz reflection principle.  A similar fact holds for the earlier classical Herglotz integral representation for functions from the disk to the right half plane \cite{herg11, rie11}.  Nevanlinna and Herglotz functions have a number of applications, for example to the study of finite rank perturbations of self-adjoint operators; see the survey papers \cite{liawfrymark19, lt15}, book \cite{cmr06} and references within. 
	
Similarly, Schur functions $\phi: \mathbb{D} \rightarrow \overline{\mathbb{D}}$ possess a \textbf{transfer function realization} (or $\TFR$); i.e. they
	can be written in form\footnote{Here and throughout the paper, ``$1$'' denotes the identity operator on an appropriate Hilbert space that should be clear from the context. The notation ``$I$'' is reserved for an interval or open set.}
		$$\phi(z) = A+ B(1-zD)^{-1}zC \quad \text{ for } z \in \mathbb{D},$$
	where 
		$$U=\bbm A & B \\ C & D\ebm :  \bbm \mathbb{C} \\ \mathcal{M} \ebm   \rightarrow \bbm \mathbb{C} \\ \mathcal{M} \ebm  $$
	is a contraction on a Hilbert space $\mathbb{C} \oplus \mathcal{M}$, see \cite{helton74}. The operator $U$ can be chosen to be isometric, coisometric, or unitary; in each case, the choice is unique up to certain minimality assumptions and unitary equivalence, and there are concrete function theory interpretations for the canonical Hilbert spaces $\mathcal{M}$ and the operators $A, B, C, D$, see \cite{deb-rov1, balbol12, adrs97}. Under minimality assumptions, the set of $\tau \in \mathbb{T}$  where $1-\tau D$ is invertible is exactly the set where $\phi$ analytically continues with modulus $1$ and can therefore be analytically continued via the reflection principle on the disk.
	
The pioneering work of Agler in  \cite{ag90} (part of which was independently established by Kummert in \cite{kum89}) implies that each Schur function $\phi: \mathbb{D}^2 \rightarrow \overline{\mathbb{D}}$ has a two-variable $\TFR$
 and hence can be written as
		$$\phi(z) = A+ B(1-E_zD)^{-1}E_zC \quad \text{ for } z \in \mathbb{D}^2,$$
	where 
		$$U=\bbm A & B \\ C & D\ebm :  \bbm \mathbb{C} \\ \mathcal{M} \ebm   \rightarrow \bbm \mathbb{C} \\ \mathcal{M} \ebm  $$
	is a contraction on a Hilbert space $\mathbb{C} \oplus \mathcal{M}$ and can be chosen to be unitary, isometric, or coisometric. Here $\mathcal{M}$ decomposes as $\mathcal{M}_1 \oplus \mathcal{M}_2$ and  $E_z = z_1 P_1 + z_2 P_2$ where each $P_j$ is the projection onto $\mathcal{M}_j$.
	
	While Agler's initial proof was nonconstructive, influential work by Ball, Sadosky, and Vinnikov in \cite{bsv05}  used minimal scattering systems (for example, the so-called de Branges-Rovnyak model associated to $\phi$) and concrete Hilbert space geometry to produce and analyze more specific $\TFRs$.  
	They continued this seminal work with Kaliuzhnyi-Verbovetskyi  in \cite{bsv15}, which includes an exhaustive analysis of $\TFRs$ and connections between the geometric scattering structure and associated formal reproducing kernel Hilbert spaces. Ball and Bolotnikov conducted additional insightful work on canonical $\TFRs$ in \cite{balbol10, balbol12}. Many of these references also include results for the more general Schur-Agler class on $\mathbb{D}^d$, and we refer the reader to  \cite{gkvw08} for interesting related results concerning general Schur functions on $\mathbb{D}^d.$ 
	
If a Schur function $\phi$ is \textbf{inner}, i.e. if 
\[ \lim_{r \nearrow 1} | \phi(r \tau)| =1 \text{ for a.e. } \tau \in \mathbb{T}^2,\]
then the 
Hilbert space geometry from \cite{bsv05} simplifies dramatically. Indeed, in \cite{bk, bk2}, the first author and G. Knese  constructed particularly simple coisometric $\TFRs$ for two-variable inner functions using the Ball-Sadosky-Vinnikov machinery from \cite{bsv05}. This methodology yielded explicit formulae for the (reproducing kernel) Hilbert space $\mathcal{M}$, regularity properties of the functions in $\mathcal{M}$, and information about $A, B, C, D$. 
If $\phi$ is both rational and inner, then its $\TFRs$ come directly from sums of squares decompositions of related \textbf{stable} polynomials, i.e.~polynomials that do not vanish on $\mathbb{D}^2$, see \cite{bk, kum89, gewo04, knese08, w10}.  In this case, $\phi$ possesses a minimal $\TFR$ in the sense that if the degree of $\phi$ in $z_j$ is $m_j$, then 
$\mathcal{M}$ can be chosen so $\dim \mathcal{M} = m_1+m_2$. The proof of this minimality result is embedded in Kummert's work \cite{kum89}, and an extension with particularly clear exposition can be found in \cite {kn20}.  

The above minimality result was a key tool in  \cite{amyloew}. In this groundbreaking paper, Agler, McCarthy, and Young characterized multivariate \textbf{monotone matrix functions} via two types of monotonicity, a \emph{global} condition and a \emph{local} condition. Specifically, a real-valued function $f$ is \textbf{globally matrix monotone} on an open set $E \subseteq \mathbb{R}^d$ if for any positive integer $n$ and any pair of $d$-tuples of commuting $n\times n$ self-adjoint matrices $A=(A_1, \dots, A_d)$, $B=(B_1, \dots, B_d)$ with each $A_j \le B_j$  and joint spectrum in $E$, one has $f(A) \le f(B)$.  Meanwhile, $f$ is called \textbf{locally matrix monotone} on $E$ if the previous-described relation holds on positively-oriented paths in the variety of commuting self-adjoint matrices. As the exact notation of  local matrix monotonicity is 
cumbersome and not required in the current discussion, we refer the reader to \cite{amyloew, pascoelownote} for details. 

The work in \cite{amyloew} with later minor refinements in \cite{pascoelownote} yields  the following characterization of local matrix monotonicity:

\begin{theorem}\label{commlow} \cite{amyloew}. Let $E$ be an open set in $\mathbb{R}^d$. A function $f:\mathbb{E} \rightarrow \mathbb{R}$ is locally matrix monotone on $E$ if and only if $f$ analytically continues to $\Pi^d$ as a  map $f : E\cup \Pi^d \rightarrow \overline{\Pi}$ in the Pick-Agler class.
\end{theorem}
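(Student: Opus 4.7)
The plan is to prove the two directions separately, with the forward direction (Pick-Agler implies locally matrix monotone) being a direct consequence of realization theory and the converse requiring a more delicate assembly of an analytic continuation from infinitesimal data.

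For the forward direction I would invoke the Agler/Nevanlinna-type decomposition: any $f$ in the Pick-Agler class on $\Pi^d$ admits positive sesquianalytic kernels $k_1, \ldots, k_d$ on $\Pi^d \times \Pi^d$ satisfying
$$\frac{f(z) - \overline{f(w)}}{2i} = \sum_{j=1}^d (z_j - \overline{w_j})\, k_j(z, w).$$
Given a smooth positively oriented path $t \mapsto (A_1(t), \ldots, A_d(t))$ of commuting self-adjoint $d$-tuples with joint spectrum in $E$, the commuting self-adjoint functional calculus is unambiguously defined via simultaneous diagonalization. Differentiating the above identity along the path and exploiting positivity of each $k_j$, I would derive that $\tfrac{d}{dt} f(A(t))$ is a positive linear combination of the $\tfrac{d}{dt} A_j(t)$, which is exactly local monotonicity.

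For the converse, assume $f$ is locally matrix monotone on $E$. First I would use monotonicity against scalar and small commuting matrix paths to extract enough smoothness of $f$ on $E$ together with positive semi-definiteness of Löwner-type matrices built from first divided differences at each point of $E$; this yields an infinitesimal Pick-Agler condition. Next, for any finite jet data drawn from $f$ at finitely many points of $E$, Agler's Pick-Agler interpolation theorem should produce a local Pick-Agler function matching that data. A normal families argument applied to the net of such local solutions, together with Schwarz reflection across $E$, should deliver a candidate extension $F : E \cup \Pi^d \to \overline{\Pi}$ with $F|_E = f$. The main obstacle is ensuring that $F$ inherits the Pick-Agler structure, not merely the Pick structure: to overcome this, I would extract subsequential limits of the Agler kernels attached to the finite-level solutions (using that positive kernels form a normal family on compact subsets of $\Pi^d \times \Pi^d$) and verify that these limit kernels decompose $F$ via the Agler identity above, closing the loop.
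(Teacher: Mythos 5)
This theorem is not proved in the paper at all: it is imported verbatim from \cite{amyloew} (with a minor later refinement from \cite{pascoelownote}), and the authors simply cite it. So there is no in-paper proof to compare your sketch against; what follows is an assessment of the proposal on its own terms.

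Your forward direction is sound in outline: a Pick--Agler function admits an Agler--Nevanlinna decomposition $f(z)-\overline{f(w)}=\sum_j (z_j-\overline{w_j})\,k_j(z,w)$ with positive kernels, and differentiating along a smooth path of commuting self-adjoint tuples, using positivity of the $k_j$ and the fact that the functional calculus is defined through simultaneous diagonalization, does yield $\tfrac{d}{dt}f(A(t))\ge 0$. (One does have to justify that the kernel identity survives the passage to boundary values on $E$, but this is tractable.) The converse, however, has a genuine gap at the interpolation step. You invoke ``Agler's Pick--Agler interpolation theorem'' to produce a local Pick--Agler function matching finite jet data drawn from $f$ at points of $E\subset\mathbb{R}^d$. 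That theorem (Agler, Ball--Trent) solves interpolation at \emph{interior} points of $\mathbb{D}^d$ or $\Pi^d$; here your nodes lie on the distinguished boundary, and you are prescribing derivative jets there. Boundary Nevanlinna--Pick for the Schur--Agler class is a much more delicate problem, and it is not covered by the cited theory; in fact, for $d>2$ it is not even clear a priori that the Löwner-type positivity you extract from local matrix monotonicity is strong enough to guarantee a Pick--\emph{Agler} (as opposed to merely Pick) interpolant. The actual argument in \cite{amyloew} does not go this route: rather than interpolating jet data and passing to a normal-families limit, they construct the analytic continuation directly from a resolvent-type (Löwner/Hamburger) representation assembled from the local monotonicity data. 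You also take for granted the $C^1$-regularity of $f$ on $E$ that makes divided differences meaningful; deriving that from local matrix monotonicity is itself nontrivial (it is a substantial step even in Löwner's original one-variable theorem) and would need to be argued, not merely asserted.
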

When $d = 1$ or $d = 2$, the Pick-Agler class is exactly the Pick class, i.e.~the set of analytic functions mapping $\Pi^d$ to $\overline{\Pi}$. More generally, the Pick-Agler class is the set of Pick functions that satisfy von Neumann's inequality after being converted to Schur functions via conformal mappings. The two-variable von Neumann inequality is known as And\^o's inequality \cite{and63} and fails in more than two variables \cite{par70, var74}. For $d >2$, this failure implies that the Pick-Agler class is a strict subset of the Pick class. For additional information about Pick-Agler functions and their structure, we refer the reader to \cite{amyloew}.

In the two-variable rational case, Agler, McCarthy, and Young  used the minimality of $\TFRs$ for rational inner functions  to characterize global matrix monotonicity on rectangles. 

\begin{theorem} \cite{amyloew}. If $E \subseteq \mathbb{R}^2$ is a rectangle and $f:\mathbb{E} \rightarrow \mathbb{R}$  is rational, then $f$ is globally matrix monotone on $E$ if and only if $f$ analytically continues to $\Pi^2$ as a  map $f : E\cup \Pi^2 \rightarrow \overline{\Pi}$ in the Pick class.\end{theorem}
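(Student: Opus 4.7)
The forward direction is routine: global matrix monotonicity on $E$ trivially implies local matrix monotonicity on $E$, and Theorem~\ref{commlow} then provides an analytic continuation of $f$ to $\Pi^2$ as a Pick--Agler function. Since $d=2$, And\^o's inequality forces the Pick--Agler class and the Pick class to coincide, so the continuation lies in the Pick class as required.

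For the converse, the plan is to use the minimality of TFRs for rational inner functions to produce a concrete, finite-dimensional operator model of $f$ from which global monotonicity can be extracted directly. Starting from a rational Pick continuation $f : E \cup \Pi^2 \to \overline{\Pi}$, I would Cayley-transform in each coordinate to obtain a rational inner function $\phi : \mathbb{D}^2 \to \overline{\mathbb{D}}$ (the inner property uses that $f$ is real on the real axis portion of $E$). Kummert's minimality result then yields a unitary TFR
$$\phi(z) = A + B(1 - E_z D)^{-1} E_z C, \qquad E_z = z_1 P_1 + z_2 P_2,$$
on $\mathbb{C} \oplus \mathcal{M}$ with $\dim \mathcal{M} = m_1 + m_2$, where $m_j = \deg_{z_j}\phi$. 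Inverting the Cayley transforms puts $f$ into a finite-dimensional Nevanlinna-type form
$$f(w_1,w_2) = \alpha + \beta_1 w_1 + \beta_2 w_2 + \langle (Y - w_1 P_1 - w_2 P_2)^{-1} v, v\rangle,$$
with $\alpha \in \mathbb{R}$, $\beta_j \ge 0$, $Y$ self-adjoint on $\mathcal{M}$, and orthogonal projections with $P_1 + P_2 = 1$. The assumption that $E = I_1 \times I_2$ is a rectangle is used precisely to guarantee that the operator pencil $Y - t_1 P_1 - t_2 P_2$ is invertible for every $(t_1, t_2) \in E$.

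The substance of the theorem is the remaining verification that $f(A) \le f(B)$ for commuting self-adjoint pairs $A = (A_1, A_2)$ and $B = (B_1, B_2)$ with joint spectra in $E$ and $A_j \le B_j$. The linear terms $\beta_j(B_j - A_j)$ are manifestly positive, so everything reduces to showing
$$(v \otimes 1)^* \bigl(L(B)^{-1} - L(A)^{-1}\bigr) (v \otimes 1) \ge 0, \qquad L(X) := Y \otimes 1 - P_1 \otimes X_1 - P_2 \otimes X_2.$$
The main obstacle is that $A$ and $B$ need not mutually commute, so no commuting path links them and one cannot naively invoke one-variable L\"owner monotonicity slice-wise. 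My plan is to exploit the Pick-kernel positivity encoded in the Nevanlinna representation: by writing $A$ and $B$ in their respective joint eigenbases and pairing their eigenvalues compatibly with the coordinate inequalities $A_j \le B_j$, the desired operator inequality should follow from positive semidefiniteness of a finite Pick-type Gram matrix evaluated on the paired spectra. The finite dimensionality of $\mathcal{M}$ coming from Kummert minimality, together with the rectangular joint-spectrum assumption ensuring that no denominator in the resolvent vanishes, is precisely what makes this quadratic-form reduction tractable.
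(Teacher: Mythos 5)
Your forward direction is fine, and the first several steps of the converse --- Cayley transform to a rational inner $\phi$, Kummert minimality to obtain a finite-dimensional unitary TFR, and conversion to a Nevanlinna pencil --- correctly parallel the route the paper attributes to Agler--McCarthy--Young and redevelops in Sections~\ref{algebra} and~\ref{concretenp}. But the final step, which is the entire substance of the theorem, is not proved, and the sketch you offer for it would not work. You correctly identify the obstruction: $A$ and $B$ need not commute with each other, so no commuting path joins them. Your proposed remedy --- writing $A$ and $B$ in their respective joint eigenbases, ``pairing their eigenvalues compatibly with $A_j \le B_j$,'' and invoking a Pick-type Gram matrix --- does not resolve it. When $A_j$ and $B_j$ fail to commute, the operator inequality $A_j \le B_j$ yields no canonical pairing of the two eigenbases (Weyl's inequalities give much less than a pairing), and the L\"owner/Pick divided-difference Gram machinery requires a \emph{common} spectral resolution, which is exactly what is missing here.

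The way past the obstruction, and the route the paper itself takes in the closely analogous Theorem~\ref{mccarthy1}, is to leave the commuting world. The pencil $H(X_1,X_2) = (v\otimes \id)^*\bigl(Y\otimes \id - P_1\otimes X_1 - P_2\otimes X_2\bigr)^{-1}(v\otimes \id)$ makes sense for arbitrary self-adjoint $X_1,X_2$ that need not commute, and along the straight-line path $X(t)=(1-t)A+tB$ of non-commuting tuples its derivative is
\[
(v\otimes \id)^*\,L(X(t))^{-1}\bigl(P_1\otimes(B_1-A_1)+P_2\otimes(B_2-A_2)\bigr)L(X(t))^{-1}(v\otimes \id),
\]
which is nonnegative because $L(X(t))$ is self-adjoint and the middle factor is positive semidefinite; integrating over $t\in[0,1]$ gives $f(A)\le f(B)$. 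This is precisely the content of the noncommutative L\"owner theorem from \cite{pastd13} that the paper invokes to finish its own argument. Separately, one must show that $L(X(t))$ is invertible along the whole segment --- this does not follow merely from invertibility of the pencil at points of $E$, and is where a sign condition on the block $Y$ (equivalently positivity of $T_{22}$ after normalization) enters; your proposal treats this as automatic, which is another gap.
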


The question of whether real-valued restrictions of Pick-Agler functions to convex sets in $\mathbb{R}^d$ are \emph{always} global matrix monotone functions has  colloquially become known as the McCarthy Champagne conjecture: 

\begin{center} \textbf{(MCC): Every $d$-variable Pick-Agler function that analytically continues across an open convex set $E \subseteq \mathbb{R}^d$ is globally matrix monotone when restricted to $E$. } \end{center} 
As the discussed further below, we establish the MCC in two important cases, giving compelling evidence for the overall validity of the conjecture.

\subsection{Summary of results} The bulk of this paper addresses the realization questions (Q1)-(Q3). In Section 
\ref{bkyourway}, we let $\phi$ be a two-variable inner function, review the particularly simple $\TFRs$  from  \cite{bk, bk2}, and further develop their properties. For example, in Theorem \ref{thm:AD}, we extend the analysis from \cite{bk2} to answer (Q3) and provide explicit formulae for each of $A,B,C,D$.

This allows us to address (Q1) for quasi-rational functions in Section \ref{quasirational}. Here, we say that a two-variable Schur function $\phi$ is \textbf{quasi-rational} with respect to an open $I \subseteq \mathbb{T}$ if $\phi$ is inner and extends continuously to $\T \times I$ with $\abs{\phi(\tau)} = 1$ for $\tau \in \T \times I$. The analysis from both Section \ref{bkyourway} and  \cite{bk, bk2} allows us to establish this key regularity property:

\begin{theorem*}\ref{invertonI}.
If $\phi$ is quasi-rational with respect to $I$ and $D$ is from Theorem \ref{thm:AD}, then $1 - E_\tau D$ is invertible for all $\tau \in \T \times I$.
\end{theorem*}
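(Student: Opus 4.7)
The plan is to exploit the concrete description of $D$ from Theorem~\ref{thm:AD}, which identifies $\M = \HKo \oplus \HKt$ and presents $D$ as a $2\times 2$ block operator whose diagonal blocks are the compressed shifts $\Sone$ and $\Stwo$. Writing $E_\tau = \tau_1 P_1 + \tau_2 P_2$ and $D=(D_{ij})$, the operator of interest is
\[
1 - E_\tau D \;=\;
\begin{pmatrix}
1-\tau_1 D_{11} & -\tau_1 D_{12}\\
-\tau_2 D_{21} & 1-\tau_2 D_{22}
\end{pmatrix},
\]
so invertibility reduces to invertibility of the two diagonal blocks together with a Schur complement.

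First I would treat the slice at $\tau_2\in I$. By the quasi-rational hypothesis, $z_1 \mapsto \phi(z_1,\tau_2)$ is a one-variable inner function continuous on $\overline{\D}$ and unimodular on $\T$, hence a finite Blaschke product. The explicit kernel formula for $\HKo$ and the identification of $D_{11}$ with (a piece of) $\Sone$ from Theorem~\ref{thm:AD} then let one read off that $\sigma(D_{11})\subseteq\D$, so $1-\tau_1D_{11}$ is invertible on $\HKo$ for every $\tau_1\in\T$.

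Second I would address $1-\tau_2 D_{22}$. Using the concrete description of $\HKt$ from Theorem~\ref{thm:AD}, the continuity and unimodularity of $\phi$ on $\T\times I$ translate into the statement that every element of $\HKt$ extends analytically across $\T\times I$ in its $z_2$ dependence. Interpreting this through the action of $\Stwo$ forces $I$ to lie in the resolvent of $D_{22}$, so $1-\tau_2 D_{22}$ is invertible on $\HKt$ for every $\tau_2\in I$. With both diagonal blocks invertible, I would close out by a Schur complement: the coisometric colligation identities from Theorem~\ref{thm:AD} express $D_{12}$ and $D_{21}$ in terms of the column operators $B$ and $C$, which rewrites the Schur complement as $1-\tau_1 D_{11}$ plus a correction that factors through the finite-dimensional data of the Blaschke product from Step~1, yielding invertibility.

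\textbf{Main obstacle.} The hardest step is the second one. Because $\HKt$ is typically infinite dimensional and $\Stwo$ is only a contraction, its spectrum can in principle meet $\T$, so spectral containment cannot be extracted from norm or contractivity considerations alone. What must carry the argument is the explicit kernel formula of Theorem~\ref{thm:AD}, which exhibits the elements of $\HKt$ as genuine analytic functions whose boundary regularity is slaved to that of $\phi$; converting continuity of $\phi$ on $\T\times I$ into resolvent estimates for $\Stwo$ at points of $I$ is where the real work concentrates, and is precisely the point at which the canonical de Branges--Rovnyak model of Section~\ref{bkyourway} becomes indispensable.
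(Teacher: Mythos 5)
Your proposal attacks the problem through a block $2\times2$ Schur-complement decomposition of $1-E_\tau D$. The paper does something quite different: it shows directly that $1-E_\tau D$ has dense range (via Proposition~\ref{Dshift} together with Lemma~\ref{unitarylemma}) and then, by a separate quantitative argument, that it is bounded below (via the uniform point-evaluation bound \eqref{eqn:Cestimate} and a careful splitting of $L^2$-norms around the boundary point $\tau$). There is a genuine insight hiding in your sketch: since each $\tau_2$-slice $\phi(\cdot,\tau_2)$ is a finite Blaschke product and the restriction map $J_{\tau_2}\colon\HKt\to\K_{\phi_{\tau_2}}$ of Lemma~\ref{unitarylemma} is unitary, the space $\HKt$ is \emph{finite-dimensional}, so the off-diagonal blocks of $D$ are finite rank. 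That observation does make one of the two diagonal blocks easy to handle, and the paper never isolates it as you do. (As a side note, your labeling is internally inconsistent — in the paper's conventions $E_\tau$ multiplies the $\HKt$-component by $\tau_1$ and the $\HKo$-component by $\tau_2$, and it is $\HKt$, not $\HKo$, that is finite-dimensional.)

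However, the proposal contains two genuine gaps that your own ``main obstacle'' paragraph only half-acknowledges. First, Step~2 asserts that $1-\tau_2 D_{22}$ is invertible on the remaining block, which is typically infinite-dimensional, but the justification — that boundary regularity of $\phi$ ``forces $I$ to lie in the resolvent'' — is not an argument. The compressed shift is a contraction of norm possibly equal to $1$, so its approximate point spectrum can in principle reach $\T$; ruling this out requires a quantitative boundedness-below estimate, not a qualitative analytic-continuation statement, and that estimate is precisely what the paper's second half is devoted to. Second, even granting both diagonal blocks invertible, the Schur complement is $(1-\tau_2 D_{22})$ minus a finite-rank correction; this is a Fredholm operator of index zero, so its invertibility still reduces to an injectivity (equivalently, dense-range) statement for $1-E_\tau D$ itself, which is the content of the paper's first half. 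In other words, the Schur-complement framing does not circumvent either of the hard steps — it reorganizes the problem into two pieces that each still require the estimates the paper proves, so ``completing'' your outline would route you through essentially the same work. As written, the proposal is an outline with the key steps missing rather than a proof.
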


It is worth noting that this question of when operators of the form $(1-E_\tau D)$ are invertible is also connected to the study of robust stabilization in control engineering, see \cite{amyloew, dfcontrol}. 

Section  \ref{algebra} addresses (Q2) and shows how to move between realizations on different canonical domains while preserving delicate regularity behavior; see Theorems \ref{SchurToHerglotz} and \ref{thm:Nform}. Specifically, we show that on the level of algebra, the set of definition of a realization is the same as that when the domains have been conformally transformed, excepting obvious obstructions. In the noncommutative case, the results we obtain are completely clean, ``minimal" realization formulae that are canonical and therefore have maximal domain, similar to the results in \cite{ptdroyal}. Section  \ref{concretenp} contains an application of these theorems; we use the canonical realization from Theorem \ref{thm:AD} for  inner Schur functions on $\mathbb{D}^2$ to obtain canonical representations for real Pick functions on $\Pi^2.$ 

Section \ref{mccarthy} addresses our progress on the McCarthy Champagne conjecture. We first combine the machinery from Section \ref{algebra} with Theorem \ref{invertonI} to establish

\begin{theorem*}\ref{mccarthy1}. If $f$ arises from a two-variable quasi-rational function $\phi$, then the MCC holds for $f$. 
\end{theorem*}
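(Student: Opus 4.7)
My plan is to lift the matrix monotonicity question for $f$ to an operator-theoretic question about an explicit realization formula, via a three-step reduction. First, apply Theorem \ref{thm:AD} to the quasi-rational function $\phi$ to obtain the concrete coisometric transfer function realization $\phi(z) = A + B(1 - E_z D)^{-1} E_z C$ on $\mathbb{D}^2$, where $E_z = z_1 P_1 + z_2 P_2$ for orthogonal projections $P_1, P_2$. Second, invoke Theorem \ref{invertonI} to conclude that $1 - E_\tau D$ is invertible for every $\tau \in \mathbb{T} \times I$, so the realization itself analytically continues across this distinguished boundary set. Third, apply the Schur-to-Pick conversion machinery of Section \ref{algebra} (Theorems \ref{SchurToHerglotz} and \ref{thm:Nform}) to transport the realization through the Cayley transform. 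Because those theorems are designed not to introduce spurious singularities beyond obvious conformal obstructions, the resulting realization of $f$ on $\Pi^2$ extends analytically through $E = \mathbb{R} \times J$, where $J$ is the conformal image of $I$, an open convex set to which the MCC applies.

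The output of this procedure is a Nevanlinna-type realization of the shape
\[ f(z) = \alpha + \beta_1 z_1 + \beta_2 z_2 + v^*(T - z_1 Q_1 - z_2 Q_2)^{-1} v, \]
where $\alpha \in \mathbb{R}$, $\beta_j \geq 0$, $T$ is self-adjoint, and $Q_1, Q_2$ are orthogonal projections summing to the identity. Global matrix monotonicity then follows from classical Loewner-type reasoning. Given commuting self-adjoint pairs $X = (X_1, X_2)$ and $Y = (Y_1, Y_2)$ with $X_j \leq Y_j$ and joint spectra in $E$, one evaluates $f(X), f(Y)$ using the joint functional calculus: the linear piece $\beta_1 X_1 + \beta_2 X_2$ is monotone since $\beta_j \geq 0$, while the resolvent piece, naturally interpreted via the tensor product construction $I \otimes T - X_1 \otimes Q_1 - X_2 \otimes Q_2$ compressed against $v$, is monotone by operator monotonicity of the resolvent of a self-adjoint operator. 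Summing the contributions gives $f(X) \leq f(Y)$, establishing the MCC for $f$.

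The main obstacle lies in the bookkeeping of the second and third steps: one must verify that the Schur-to-Pick conversion of Section \ref{algebra} preserves both the extension set (inheriting the invertibility from Theorem \ref{invertonI}) and the structural features of the realization, namely self-adjointness of $T$ and nonnegativity of the linear coefficients $\beta_j$. These are the precise properties on which the resolvent-monotonicity argument rests, and once they are in place the proof reduces to the standard observation that resolvents of self-adjoint operators are operator monotone, applied componentwise through the joint functional calculus for commuting tuples.
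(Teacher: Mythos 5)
Your high-level strategy is the same as the paper's: concrete realization of $\phi$ from Theorem~\ref{thm:AD}, boundary invertibility of $1-E_\tau D$ from Theorem~\ref{invertonI}, and conversion to a Nevanlinna-type formula via Theorems~\ref{SchurToHerglotz} and~\ref{thm:Nform}. You also correctly identify the crux --- one must verify that the resulting $T$ is self-adjoint and that the relevant ``denominator'' operator is positive semidefinite --- but you explicitly flag it as ``the main obstacle'' and leave it unresolved. That is a genuine gap, because the verification is not bookkeeping; it requires an idea.

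Concretely, the realization of $\phi$ produces an operator $U = V^*$ which is a priori only a \emph{coisometry}, and for a coisometry $T = i(1+U)(1-U)^{-1}$ is \emph{not} self-adjoint. The paper's resolution is to note first that $\Phi(1,1)\neq 1$ together with the invertibility of $1-D$ (supplied by Theorem~\ref{invertonI} applied at $(1,1)$) forces $\id - U$ to be invertible, via the block $2\times 2$ inverse formula of Section~\ref{sec:inverseforms}; and then that a coisometry with $1-U$ invertible must actually be \emph{unitary}, by the von Neumann--Wold decomposition (the shift part of the Wold decomposition of $U^*$ would obstruct invertibility of $1-U$). Only then is $T$ self-adjoint. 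The positivity of $T_{22}$ is a separate step, deduced from the fact that $(T_{22}+E_w)^{-1}$ exists for every $w\in(0,\infty)^2$. Without these two observations, your resolvent-monotonicity argument has nothing to stand on.

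Some smaller discrepancies: the formula that actually emerges is $F(w) = T_{11} - T_{12}(T_{22}+E_w)^{-1}T_{21}$, with no separate linear terms $\beta_j w_j$; the paper must first conjugate by conformal maps $\beta,\gamma$ to move the problem to a quasi-rational $\Phi$ with a controlled boundary point $(1,1)$, which you elide; and the conclusion is restricted to rectangles avoiding the set where $\phi=1$ (the pole of the Cayley transform), so your claimed domain $\mathbb{R}\times\alpha(I)$ is too strong. Finally, where you sketch a direct tensor-product resolvent argument, the paper invokes the noncommutative L\"owner theorem of \cite{pastd13}; your sketch captures the same underlying mechanism, though the cited theorem also cleanly handles the passage from the free set of matrix tuples to commuting tuples.
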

For the exact details of the statement, including the domain where $f$ is globally matrix monotone as well as the connection between $f$ and its associated quasi-rational function $\phi$, see Section \ref{mccarthy}. In that section, we also study a class of $d$-variable  Pick-Agler functions known as  \textbf{commutative perspective functions}, which appear in the operator means literature \cite{andokubo80, effros09, ebadian2011, effroshansen13}. We show that the noncommutative L\"owner theorem from \cite{pastd13} implies that
\begin{theorem*}\ref{mccarthy2}.
		If $f$ is a $d$-variable commutative perspective function, then the MCC holds for $f$.
\end{theorem*}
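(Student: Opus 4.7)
The plan is to deduce the MCC for commutative perspective functions by reducing to the noncommutative Löwner theorem of \cite{pastd13}. A $d$-variable commutative perspective function $f$ is built via the operator perspective construction from a lower-dimensional (ultimately one-variable) function $g$; on commuting self-adjoint tuples one has, schematically,
$$ f(A_1,\dots,A_d) = A_1\, g\!\left(A_1^{-1}A_2,\dots,A_1^{-1}A_d\right),$$
a formula homogeneous of degree one under Euclidean scaling. The first step is to translate the hypothesis that $f$ lies in the Pick--Agler class and analytically continues across the convex open $E \subseteq \mathbb{R}^d$ into the analogous hypothesis that $g$ analytically continues across the convex open image $\widetilde{E}$ of $E$ under the dehomogenizing change of coordinates. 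The homogeneity of $f$ together with the structure of the Pick--Agler class should make this translation essentially formal: positive-scaling invariance pushes the Pick--Agler property for $f$ down to a Pick-type property for $g$ in one fewer variable.

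The second step applies the noncommutative Löwner theorem from \cite{pastd13} to $g$. That theorem upgrades the analytic-continuation hypothesis into a Nevanlinna--Herglotz type operator integral representation certifying \emph{noncommutative} matrix monotonicity of $g$ on its real domain of continuation. Substituting this representation back into the perspective formula expresses $f$ on commuting $d$-tuples as an integral of elementary building blocks; each such block, being the commutative perspective of a noncommutatively monotone resolvent-type function, is itself monotone with respect to the pointwise order on commuting self-adjoint tuples. This uses the classical compatibility of the perspective with operator monotonicity, as in the operator means literature cited in the paper.

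Consequently, if $A=(A_1,\dots,A_d)$ and $B=(B_1,\dots,B_d)$ are commuting self-adjoint $d$-tuples with $A_j \le B_j$ and joint spectra in $E$, integrating the representation yields $f(A) \le f(B)$, which is exactly the global matrix monotonicity required by the MCC. The main obstacle will be the first step: one must verify that the Pick--Agler condition on $f$, which involves von Neumann's inequality under a conformal transformation to the polydisk, truly descends through the perspective change of variables to a condition on $g$ of precisely the form consumed as input by \cite{pastd13}, and that the convex set of analytic continuation is preserved by this reduction. Once this translation is in place, the remainder is a bookkeeping exercise combining the integral representation with the monotonicity of the perspective operation.
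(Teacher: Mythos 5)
Your reduction via dehomogenization is fundamentally different from the paper's argument, and it has a real gap that I don't see how to close. You propose writing $f(z_1,\dots,z_d) = z_1 g(z_2/z_1,\dots,z_d/z_1)$ and pushing the Pick--Agler hypothesis on $f$ down to a Pick-type hypothesis on the $(d-1)$-variable function $g$. The problem is that the quotient map $(z_1,\dots,z_d) \mapsto (z_2/z_1,\dots,z_d/z_1)$ does not send $\Pi^d$ into $\Pi^{d-1}$: for $z_1, z_j \in \Pi$ the ratio $z_j/z_1$ can lie anywhere in the slit plane, so $g$ does not inherit a Pick (or Pick--Agler) structure on a polyhalfplane in fewer variables. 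This "essentially formal" descent is precisely the step that fails, and you flagged it yourself as the main obstacle without resolving it. The second step is also not well posed: the noncommutative L\"owner theorem in \cite{pastd13} takes a free (noncommutative) function as input, and you have not explained how the commutative $g$ acquires a canonical free extension. Finally, the claim that plugging the representation of $g$ back into the perspective formula produces building blocks that are "themselves monotone" relies on the $2$-variable operator-means perspective calculus, which does not automatically extend to $d > 2$.

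The paper's proof keeps $f$ as a $d$-variable function throughout and exploits homogeneity in a completely different way. After extending $f$ to a Pick--Agler function on $\Pi^d$ by Theorem \ref{commlow}, homogeneity (pushed to $\Pi^d$ by uniqueness of analytic continuation) forces $f$ to have a carapoint at the origin. Theorem $1.6$ of \cite{aty13} then gives a "type IV" Nevanlinna representation
\[
f(z) = \Bigl\langle \Bigl(A + \sum_i z_i^{-1} Y_i\Bigr)^{-1}\nu,\ \nu\Bigr\rangle,
\]
and a second use of homogeneity ($f(z) = t f(z/t)$, let $t \to \infty$) kills the $A$ term. The crucial observation, which has no analogue in your proposal, is that the resulting expression $\bigl(\sum_i z_i^{-1}Y_i\bigr)^{-1}$ makes sense verbatim for arbitrary $d$-tuples of positive matrices, not just commuting ones, so the noncommutative L\"owner theorem applies directly to $f$ itself on $(0,\infty)^d$. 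No reduction in the number of variables occurs, and the global monotonicity is obtained on all of $(0,\infty)^d$ rather than on the cone $C$, which your scheme would not recover either.
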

 One surprising aspect of the precise statement of Theorem \ref{mccarthy2} is the following:  it only assumes that $f$ is locally matrix monotone on a positive cone $C \subseteq (0,\infty)^d$ but concludes that $f$ must actually be globally matrix monotone on all of $(0, \infty)^d$. 
\section{Two-variable realization formulae} \label{bkyourway}

We begin with the technical setup for the  de Branges-Rovnyak canonical model theory for two variable inner functions from \cite{bk, bk2}.  Throughout this section, let $\phi: \D^2 \to \D$ be a two variable inner function. 

Denote by $H^2 = H^2(\D^2)$ the Hardy space on the bidisk. First, we record some useful facts about the action of multiplication operators on $H^2$. For $j=1,2,$ let $M_{z_j}$ denote multiplication by $z_j$ in $H^2$ and recall that the adjoints are the backward shift operators defined by 
\[
 (M\ad_{z_2} f)(z) = \frac{f(z) - f(z_1, 0)}{z_2}, \hspace{.2in} (M\ad_{z_1} f)(z) = \frac{f(z) - f(0,z_2)}{z_1}
\]
for all $f \in H^2$ 
and so we have
\beq\label{eq1} 
f(z) = z_2(M\ad_{z_2}f)(z) + f(z_1, 0),
\eeq
\beq\label{eq2}
f(z) = z_1(M\ad_{z_1}f)(z) + f(0,z_2).
\eeq
Evaluating \eqref{eq2} at $z_2 = 0$ gives 
\beq\label{eq3}
f(z_1, 0) = z_1(\Madone f)(z_1, 0) + f(0,0),
\eeq
which can be plugged into \eqref{eq1} to produce the formula
\beq\label{fmult}
f(z) = z_2(\Madtwo f)(z) + z_1(\Madone f)(z_1, 0) + f(0,0).
\eeq

We now define the enveloping reproducing kernel Hilbert space for $\phi$ and the structured subspaces upon which the Agler model equation will be built. 
\begin{itemize}
\item Let $\K_\phi$ be the reproducing kernel Hilbert space
\[
\K_\phi = \hilbert\left[\frac{1 - \phi(z)\cc{\phi(w)}}{( 1- z_1 \cc{w_1})(1 - z_2 \cc{w_2})}\right] = H^2 \ominus \phi H^2;
\]
\item $S_1^{\max} =$ the maximum subspace of $\K_\phi$ invariant under $M_{z_1}$;
\item $S_2^{\min} = \K_\phi \ominus S_1^{\max}$;
\end{itemize}
Here $\mathcal{H}(K)$ denotes the Hilbert space of functions with reproducing kernel $K$. 
In \cite{bsv05}, Ball, Sadosky, and Vinnikov showed that with these definitions, $S_2^{\min}$ is invariant under $M_{z_2}$. We can then define these key Hilbert spaces:
\begin{itemize} 
\item $\HKo = S_1^{\max} \ominus z_1 S_1^{\max}$;
\item $\HKt = S_2^{\min} \ominus z_2 S_2^{\min}$,
\end{itemize}
where $z_j$ is shorthand for $M_{z_j}.$
As $\K_{\phi} = S_1^{\max} \oplus S_2^{\min}$, their reproducing kernels satisfy the question
\[ \frac{1 - \phi(z)\cc{\phi(w)}}{( 1- z_1 \cc{w_1})(1 - z_2 \cc{w_2})} = \frac{K_1^{\max}(z, w)}{1 -z_1 \cc{w_1}} + \frac{K_2^{\min}(z, w)}{1 -z_2 \cc{w_2}}.\] 
This immediately gives the associated \emph{Agler model equation}
\beq\label{modeleqn}
1 - \phi(z)\cc{\phi(w)} = (1 -z_1 \cc{w_1})K_2^{\min}(z, w) + (1 - z_2 \cc{w_2})K_1^{\max}(z, w).
\eeq
Set 
\[
\hilbert_\phi = \hilbert(K_2^{\min}) \oplus \hilbert(K_1^{\max}),
\]
so that each $f \in \hilbert_\phi$ can be written uniquely as $f= f_1 + f_2$ for $f_1 \in \HKt, f_2 \in \HKo$. Note that in contrast to Agler's original approach in \cite{ag90}, here we have explicit kernel structures to work with, which will allow direct calculations involving functions in $\hilbert_\phi$.

We now use \eqref{modeleqn} to derive a realization formula for $\phi$. Define an operator $V$ so that for all $w \in \mathbb{D}^2$,
\[
V\bbm 1 \\ \cc w_1 \ktwo \\ \cc{w}_2\kone \ebm \mapsto \bbm \ \cc{\phi(w)} \ \\ \ktwo \\ \kone \ebm,
\]
where $k_{1, w}^{\max} = K_1^{\max}(\cdot, w)$ and $k_{2, w}^{\min} = K_2^{\min}(\cdot, w)$.
Then the arguments in  \cite[pp. 6316-6318]{bk2} imply that $V$ extends to a unique isometry on $\C \oplus \HKt \oplus \HKo$. More generally, this type of argument is known as a lurking isometry argument, see  \cite{amy2020}, but often the underlying Hilbert space needs to be enlarged to guarantee that the resulting $V$ is isometric. Now, if we write
\beq \label{eqn:Udef}
V\ad = \bbm A & B \\ C & D \ebm : \bbm \C \\ \hilbert_\phi \ebm \mapsto \bbm \C \\ \hilbert_\phi \ebm,
\eeq
then for all $z\in \mathbb{D}^2$, 
\begin{equation} \label{eqn:AD1} 
\phi(z) = A + B(\id - E_z D)\inv E_z C,
\end{equation}
where $E_z = \Mone P_2 + \Mtwo P_1$ and $P_2, P_1 \in \mathcal{L}(\hilbert_\phi)$ are defined as follows: $P_2$ is the projection onto $\HKt$, and $P_1$ is the projection onto $\HKo$.

We can take advantage of the explicit structure of the model setup to derive concrete formulae for the blocks of $V\ad$.
\begin{theorem} \label{thm:AD} Let $\phi$ be a two-variable inner function with concrete realization \eqref{eqn:AD1}. 
Then the following formulas hold:
\begin{enumerate}
\item For all $x \in \C$, $A$ is given by
\[
Ax = \phi(0)x.
\]
\item For all $f \in \hilbert_\phi$, $B$ is given by 
\[
B\bbm f_1 \\ f_2 \ebm = (f_1+ f_2)(0).
\]
\item For all $x \in \C$, $C$ is given by
\[
Cx = \bbm P_2 \Madone \phi \\ P_1 \Madtwo \phi \ebm x.
\]
\item For all $f \in \hilbert_\phi$, $D$ is given by 
\[
D\bbm f_1 \\ f_2 \ebm = \bbm P_2 \Madone (f_1 + f_2) \\ P_1 \Madtwo (f_1 + f_2) \ebm.
\]
\end{enumerate}
\end{theorem}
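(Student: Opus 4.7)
The plan is to check that the operator defined by the proposed block formulas — equivalently
\[
V\ad\left(\alpha \oplus (h_1 \oplus h_2)\right) = G(0) \oplus (P_2 \Madone G) \oplus (P_1 \Madtwo G), \qquad G := \alpha \phi + h_1 + h_2
\]
— agrees with the adjoint of the lurking isometry $V$. Since $V$ is isometric, it suffices to check that the proposed $V\ad$ inverts $V$ on each test vector $x_w = 1 \oplus \cc{w}_1 \ktwo \oplus \cc{w}_2 \kone$, whose image under $V$ is $\cc{\phi(w)} \oplus \ktwo \oplus \kone$. Applied to this image, the proposed formula produces $F_w(0) \oplus (P_2 \Madone F_w) \oplus (P_1 \Madtwo F_w)$ with
\[
F_w := \cc{\phi(w)} \phi + \ktwo + \kone \in H^2,
\]
so the task reduces to showing this triple equals $x_w$ for every $w \in \D^2$.

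The key algebraic simplification is the Agler model equation \eqref{modeleqn}, which, after swapping $z$ and $w$ and rearranging, collapses $F_w$ into the clean form
\[
F_w(z) = 1 + z_1 \cc{w}_1 \ktwo(z) + z_2 \cc{w}_2 \kone(z).
\]
From this, $F_w(0,0) = 1$ is immediate, and a short backward-shift computation yields
\[
\Madone F_w = \cc{w}_1 \ktwo + \cc{w}_2 z_2 (\Madone \kone),
\]
with an analogous expression for $\Madtwo F_w$. The first summand already lies in $\HKt$, so $P_2$ fixes it; the second is killed, since $\kone \in \HKo$ is orthogonal to $z_1 \Sone$ in $H^2$, which forces $\Madone \kone$ into $\Stwo$ (using that $\K_\phi$ is $\Madone$-invariant because $\phi H^2$ is $\Mone$-invariant), and then $\Mtwo$-invariance of $\Stwo$ places $z_2 \Madone \kone$ in $z_2 \Stwo \perp \HKt$. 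The symmetric argument gives $P_1 \Madtwo F_w = \cc{w}_2 \kone$, so $V\ad(V x_w) = x_w$ for every $w \in \D^2$.

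I expect the main obstacle to be bookkeeping these orthogonality and invariance properties, for which the concrete de Branges--Rovnyak geometry of \cite{bk, bk2} — in particular that $\Sone$ is $\Mone$-invariant while $\Stwo$ is $\Mtwo$-invariant, together with the orthogonal decomposition $\K_\phi = \Sone \oplus \Stwo$ — is essential. Finally, to promote the pointwise identity from test vectors to all of $\C \oplus \Hphi$, I would note that $\phi \perp \K_\phi$ in $H^2$ and $\HKt \perp \HKo$ there as well, so the assignment $\alpha \oplus (h_1 \oplus h_2) \mapsto \alpha \phi + h_1 + h_2$ is injective; combined with the reproducing property, this forces the range of $V$ to be dense in $\C \oplus \Hphi$, so the proposed $V\ad$ and the true adjoint — both bounded — agree on a dense subspace and hence everywhere. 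Specializing the resulting formula at $\alpha = 0$ or at $h_1 = h_2 = 0$ then reads off parts (1)--(4).
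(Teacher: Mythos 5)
Your proof is correct, and it takes a genuinely different route from the paper's. The paper establishes each block formula separately: $A$ and $B$ are cited from \cite[Remark 5.6]{bk}, while $C$ and $D$ are pinned down using the Gleason-problem-type uniqueness characterization from \cite[Remark 5.6]{bk2} (i.e., that $(Df)_2$ is the unique element of $\HKo$ with a prescribed slice, and similarly for $(Df)_1$), together with Lemma \ref{lem:bshift}. You instead verify the entire operator $V\ad$ at once: show the proposed operator inverts $V$ on the test vectors $Vx_w$, and then use a density argument to conclude. The crucial simplification you exploit --- collapsing $\cc{\phi(w)}\phi + \ktwo + \kone$ via the model equation into $1 + z_1\cc{w}_1\ktwo + z_2\cc{w}_2\kone$, which has no $\phi$ in it --- is what makes the backward-shift computation painless and lets you avoid the lemma the paper cites. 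Your local argument that $z_2 M_{z_1}^*\kone \perp \HKt$ (via $M_{z_1}^*$-invariance of $\K_\phi$ and $\kone \perp z_1 S_1^{\max}$) is essentially a re-derivation of the relevant part of Lemma \ref{lem:bshift}, so nothing is swept under the rug. The paper's approach buys a block-by-block picture that ties cleanly into the earlier structured-Gleason-problem literature, whereas yours is more self-contained and directly exhibits the formulas as the inverse of the lurking isometry.

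One small imprecision worth flagging: your opening claim "since $V$ is isometric, it suffices to check that the proposed $V\ad$ inverts $V$ on test vectors" is not quite sufficient on its own --- a left inverse of an isometry is unique only once the range is all of $\C\oplus\Hphi$, i.e., once $V$ is unitary. You do close this gap at the end with the density argument (orthogonality of $\phi$, $\HKt$, $\HKo$ in $H^2$ kills anything orthogonal to $\{Vx_w\}$, so $\operatorname{range} V$ is dense, hence equal to the whole space since isometries have closed range), but the logic should be stated in that order: the density argument is load-bearing, not a formality. You might also note explicitly that the operator $\alpha\oplus h \mapsto G(0)\oplus P_2 M_{z_1}^* G \oplus P_1 M_{z_2}^* G$ is bounded, since it is built from point evaluation, the backward shifts, and orthogonal projections.
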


\begin{proof}
The formulas for $A$ and $B$ are given in \cite[Remark $5.6$]{bk}. The formulas for $C$ and $D$ are proved in Lemmas \ref{Dformula} and  \ref{Cformula} below. \end{proof}

\begin{rem} 
The more general class of so-called \emph{weakly} coisometric realizations for $d$-variable Schur-Agler functions (Schur functions that also satisfy von Neumann's inequality) and their associated $A, B, C, D$ formulas were studied earlier in \cite{balbol12}. Specifically, Definition $3.1$ and Theorem $3.4$ in \cite{balbol12} also establish the formulas for $A$ and $B$ above and imply that $C$ and $D$ must each satisfy a so-called structured Gleason problem.

The concrete function theory interpretations for $C$ and $D$ in Theorem \ref{thm:AD} are also related to the technical and extensive work in \cite{bsv15}. In particular, in Theorem $5.9$, the authors assume that a given $d$-variable Schur-Agler function $\varphi$ possesses a so-called \emph{minimal augmented Agler decomposition} and use it to construct a specific unitary realization for $\varphi$ via the theory of scattering systems and formal reproducing kernel Hilbert spaces. The $A, B, C, D$ formulas that they obtain are quite similar to those of the cosimetric realization in Theorem \ref{thm:AD} above. 
\end{rem}

The following lemma will simplify later computations. Part of it appears as Proposition $3.5$ in \cite{bag}, but we include the simple proof here for completeness.
\begin{lemma} \label{lem:bshift} Let $\phi$ be a two-variable inner function with associated Hilbert spaces defined as above. Then $M_{z_2}^* \phi \in S^{\max}_1$ and $M_{z_1}^* \phi \in S^{\min}_2$. Furthermore
$ M_{z_2}^* \hilbert_\phi \subseteq S^{\max}_1$ and $ M_{z_1}^* \hilbert_\phi  \subseteq S^{\min}_2.$
\end{lemma}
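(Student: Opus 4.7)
The plan is to exploit the orthogonal decomposition $\K_\phi = S_1^{\max} \oplus S_2^{\min}$, which gives the cleaner characterization that a vector lies in $S_1^{\max}$ iff it lies in $\K_\phi$ and is orthogonal to $S_2^{\min}$ (and symmetrically for $S_2^{\min}$). With this reformulation, each claim becomes two orthogonality computations: (i) membership in $\K_\phi$, proved by checking against $\phi H^2$; and (ii) orthogonality to the ``other'' piece, proved by shifting the $M_{z_j}^*$ onto the test vector as $M_{z_j}$ and using the invariance of $S_j^{\min/\max}$ under $M_{z_j}$. Throughout, the key input on $\phi$ is that it is inner, so $M_\phi$ is an isometry on $H^2$.

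For the first statement, I would start with $M_{z_2}^*\phi$. Membership in $\K_\phi$ follows from
\[
\langle M_{z_2}^*\phi,\phi h\rangle = \langle \phi,z_2\phi h\rangle = \langle 1,z_2 h\rangle = 0,
\]
using the isometry of $M_\phi$ and that $z_2 h$ has vanishing $(0,0)$-Fourier coefficient. For orthogonality to $S_2^{\min}$, take any $g\in S_2^{\min}$; then $\langle M_{z_2}^*\phi,g\rangle = \langle\phi,z_2 g\rangle$, and since $S_2^{\min}$ is $M_{z_2}$-invariant we have $z_2 g\in S_2^{\min}\subseteq\K_\phi\perp\phi H^2\ni\phi$, so the inner product vanishes. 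Hence $M_{z_2}^*\phi\in S_1^{\max}$. The companion claim $M_{z_1}^*\phi\in S_2^{\min}$ is proved identically: the same isometry computation gives $M_{z_1}^*\phi\in\K_\phi$, and for $g\in S_1^{\max}$ we have $z_1 g\in S_1^{\max}\subseteq\K_\phi$, hence $\langle\phi,z_1 g\rangle=0$.

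For the containment $M_{z_2}^*\hilbert_\phi\subseteq S_1^{\max}$, the membership in $\K_\phi$ is now automatic: $\hilbert_\phi\subseteq\K_\phi$ and $\K_\phi$ is $M_{z_2}^*$-invariant (because $\phi H^2$ is $M_{z_2}$-invariant). For the orthogonality to $S_2^{\min}$, write $f=f_1+f_2$ with $f_1\in\HKt\subseteq S_2^{\min}$ and $f_2\in\HKo\subseteq S_1^{\max}$, and compute for $g\in S_2^{\min}$:
\[
\langle M_{z_2}^*f,g\rangle = \langle f_1,z_2 g\rangle + \langle f_2,z_2 g\rangle.
\]
The second term vanishes because $z_2 g\in S_2^{\min}\perp S_1^{\max}\ni f_2$. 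The first term vanishes because $z_2 g\in z_2 S_2^{\min}$ while $f_1\in \HKt = S_2^{\min}\ominus z_2 S_2^{\min}$ is orthogonal to $z_2 S_2^{\min}$ by definition. The argument for $M_{z_1}^*\hilbert_\phi\subseteq S_2^{\min}$ is the mirror image, using $\HKo = S_1^{\max}\ominus z_1 S_1^{\max}$ to kill $\langle f_2,z_1 g\rangle$ and the orthogonal splitting $S_1^{\max}\perp S_2^{\min}$ to kill $\langle f_1,z_1 g\rangle$.

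There isn't really a deep obstacle here; the only subtlety is keeping track of which invariance is being used where, and in particular noticing that the two needed facts $z_j S_j^{\min/\max}\subseteq S_j^{\min/\max}$ and $\HK_j\perp z_j S_j^{\min/\max}$ are exactly the two ingredients needed to kill the inner products after moving $M_{z_j}^*$ across. The one place to be careful is that $\K_\phi$ is not $M_{z_j}$-invariant in general, so one should not attempt to show $M_{z_j}^*f\in S_k^{\max}$ via the raw definition ``$M_{z_k}^n(M_{z_j}^*f)\in \K_\phi$ for all $n$'', but rather via the orthogonal complement characterization, which sidesteps this issue entirely.
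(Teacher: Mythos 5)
Your proof is correct, and it takes a genuinely different route from the paper's for the nontrivial part. Both proofs observe at the outset that $S_1^{\max}$ is $M_{z_2}^*$-invariant (your orthogonality computation, which the paper calls "easy"), which already handles $M_{z_2}^*\hilbert(K_1^{\max})\subseteq S_1^{\max}$. Where they diverge is in treating $M_{z_2}^*\phi$ and $M_{z_2}^*\hilbert(K_2^{\min})$: the paper rearranges the Agler model equation into an equality of positive kernels that exhibits $\phi$ and each element of $\hilbert(K_2^{\min})$ explicitly in the form $g(z_1)+z_2 h(z)$ with $h\in S_1^{\max}$, and then reads off that $M_{z_2}^*$ picks out $h$. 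You instead keep everything on the level of inner products, using the isometry of $M_\phi$ to get $M_{z_2}^*\phi\in\K_\phi$ and using the defining orthogonality $\HKt\perp z_2 S_2^{\min}$ (together with $M_{z_2}$-invariance of $S_2^{\min}$) to verify orthogonality to $S_2^{\min}$. Your approach is more uniform and elementary — no kernel manipulation, and the same three-line orthogonality computation disposes of $\phi$, $\hilbert(K_2^{\min})$, and $\hilbert(K_1^{\max})$ simultaneously — while the paper's kernel rewriting also carries a bit of extra structural information (the $g(z_1)+z_2 h(z)$ decomposition) that your argument bypasses. Your closing remark about why one should work with the orthocomplement characterization rather than the raw invariance definition of $S_1^{\max}$ is well taken: since $\K_\phi$ is not $M_{z_1}$-invariant, chasing powers of $M_{z_1}$ is not the right move.
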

\begin{proof} As $S^{\min}_2$ is invariant under $M_{z_2}$, it follows easily that $S^{\max}_1$ is invariant under $M^*_{z_2}$. Thus,  $ M_{z_2}^* \hilbert(K_1^{\max}) \subseteq S^{\max}_1$.
Now rewrite the model equation \eqref{modeleqn} as the following equality of positive kernels:
\[ \frac{1}{ 1- z_1 \cc{w_1}} + z_2 \cc{w_2}\frac{K_1^{\max}(z, w)}{1 -z_1 \cc{w_1}} =  \frac{\phi(z)\cc{\phi(w)}}{1- z_1 \cc{w_1}} +  K_2^{\min}(z, w) + \frac{K_1^{\max}(z, w)}{1 -z_1 \cc{w_1}}.\]
This shows that $\phi$ and each $f \in \hilbert( K_2^{\min})$ can be written as $g(z_1) + z_2 h(z)$ where $g \in H^2(\mathbb{D})$ and $h \in S^{\max}_1$. Then the definition of $M_{z_2}^*$ immediately implies $M_{z_2}^* \phi \in S^{\max}_1$ and $ M_{z_2}^* \hilbert(K_2^{\min}) \subseteq S^{\max}_1,$ which establishes the $S^{\max}_1$ inclusions.
The $S_2^{\min}$ inclusions follow from an analogous argument.
\end{proof}

We can now establish the formulae for $C$ and $D$. 

\begin{lemma}\label{Dformula}Let $\phi$ be a two-variable inner function with concrete realization \eqref{eqn:AD1}.  Then 
\[ \text{ for all } f = \bbm f_1 \\ f_2 \ebm \in \Hphi, \ \ 
D\bbm f_1 \\ f_2 \ebm  = \bbm P_2 \Madone (f_1 + f_2) \\ P_1 \Madtwo (f_1 + f_2) \ebm.
\]
\end{lemma}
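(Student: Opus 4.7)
The plan is to extract a defining relation for $Df$ from the isometric property of $V$, show that the resulting decomposition in $\HKt \oplus \HKo$ is unique, and verify that $P_2 \Madone f$ and $P_1 \Madtwo f$ satisfy this relation. Writing $(Df)_1 \in \HKt$ and $(Df)_2 \in \HKo$ for the components of $Df$, I would apply $\langle V^* x, y \rangle = \langle x, Vy \rangle$ with $x = \bbm 0 \\ f \ebm$ and $y = \bbm 1 \\ \cc{w}_1 k_{2,w}^{\min} \\ \cc{w}_2 k_{1,w}^{\max} \ebm$. The block form of $V^*$ together with the reproducing property makes the left side equal to $Bf + w_1 (Df)_1(w) + w_2 (Df)_2(w)$, while the defining formula for $V$ makes the right side equal to $f(w)$. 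Using $Bf = f(0)$ from Theorem \ref{thm:AD}, one obtains the master identity
\[
f(w) - f(0) = w_1 (Df)_1(w) + w_2 (Df)_2(w), \qquad w \in \D^2.
\]

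For uniqueness, suppose $h_1 \in \HKt$ and $h_2 \in \HKo$ satisfy $w_1 h_1 + w_2 h_2 \equiv 0$. Setting $w_2 = 0$ and $w_1 = 0$ in turn forces $h_1(w_1,0) \equiv 0$ and $h_2(0,w_2) \equiv 0$, so by \eqref{eq1} and \eqref{eq2}, $h_1 = z_2 \Madtwo h_1$ and $h_2 = z_1 \Madone h_2$. Substituting back and canceling $w_1 w_2$ yields $\Madtwo h_1 = -\Madone h_2$. By Lemma \ref{lem:bshift} the left side lies in $S_1^{\max}$ and the right side in $S_2^{\min}$; as these subspaces are orthogonal in $\K_\phi$, both sides vanish, which together with the boundary conditions forces $h_1 \equiv h_2 \equiv 0$.

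To check the candidates, set $g_0 := P_2 \Madone f \in \HKt$ and $g^*_0 := P_1 \Madtwo f \in \HKo$. Lemma \ref{lem:bshift} places $\Madone f \in S_2^{\min}$ and $\Madtwo f \in S_1^{\max}$, so the wandering subspace decompositions $S_2^{\min} = \HKt \oplus z_2 S_2^{\min}$ and $S_1^{\max} = \HKo \oplus z_1 S_1^{\max}$ yield
\[
\Madone f = g_0 + z_2 g_1, \qquad \Madtwo f = g^*_0 + z_1 g^*_1
\]
for some $g_1 \in S_2^{\min}$ and $g^*_1 \in S_1^{\max}$. Substituting these into \eqref{fmult} and comparing to the target $f - f(0) = w_1 g_0 + w_2 g^*_0$ reduces the problem to proving the single equality $\Madtwo g_0 = g^*_1$.

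The main obstacle is this last equality, which I would settle via the commutativity $\Madone \Madtwo f = \Madtwo \Madone f$. Computing each side through the two decompositions gives $\Madone g^*_0 + g^*_1 = \Madtwo g_0 + g_1$, hence
\[
g^*_1 - \Madtwo g_0 = g_1 - \Madone g^*_0.
\]
Another application of Lemma \ref{lem:bshift} places the left-hand side in $S_1^{\max}$ and the right-hand side in $S_2^{\min}$; orthogonality in $\K_\phi$ forces both to vanish, yielding $\Madtwo g_0 = g^*_1$ (and as a bonus $\Madone g^*_0 = g_1$), which, combined with the uniqueness step, completes the proof.
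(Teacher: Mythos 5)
Your proof is correct, and it takes a genuinely different route from the paper's argument. The paper imports the defining relations for $(Df)_1$ and $(Df)_2$ directly from \cite[Remark 5.6]{bk2} — namely that $(Df)_2\in\HKo$ is characterized by $(Df)_2(0,z_2)=\Madtwo f(0,z_2)$ and $(Df)_1\in\HKt$ by the $z_1$-division formula \eqref{eqn:Dform} — and then identifies each component separately by a direct computation using \eqref{fmult}, Lemma \ref{lem:bshift}, and the fact that the error terms land in $z_1\Sone$ or $z_2\Stwo$ and are annihilated by $P_1$ or $P_2$. You instead derive the single ``master identity'' $f(w)-f(0)=w_1(Df)_1(w)+w_2(Df)_2(w)$ from scratch via the defining action of $V$ and the formula $Bf=f(0)$, prove a clean uniqueness lemma for such decompositions with components constrained to $\HKt\oplus\HKo$, and then verify the candidate $(P_2\Madone f,\,P_1\Madtwo f)$ by exploiting the commutativity $\Madone\Madtwo=\Madtwo\Madone$ together with the orthogonality $S_1^{\max}\perp S_2^{\min}$. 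What your approach buys is self-containment (no appeal to \cite{bk2} for the defining relations) and symmetry: rather than bootstrapping $(Df)_1$ from an already-computed $(Df)_2$, you establish both components simultaneously, and both the uniqueness step and the final verification hinge on the same mechanism — pushing an expression into one of the two orthogonal invariant subspaces via Lemma \ref{lem:bshift} and letting orthogonality force it to vanish. The paper's computation is shorter once one grants the cited remark, but your version makes the underlying structure (unique decomposition dictated by the lurking isometry) more explicit.
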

\begin{proof}
Make the decomposition $Df = \bbm (Df)_1 \\ (Df)_2 \ebm$. We first establish the formula for $(Df)_2$ and then consider $(Df)_1.$

 By \cite[Remark 5.6]{bk2}, $(Df)_2$ is the unique function in $\HKo$ satisfying
\[
(Df)(0,z_2) = \frac{(f_1+f_2)(0,z_2) - (f_1+f_2)(0)}{z_2} = M\ad_{z_2}(f_1 + f_2)(0,z_2).
\]
By Lemma \ref{lem:bshift}, we have $M\ad_{z_2}(f_1+f_2) \in \Sone$.
Thus, we can write 
\[
M\ad_{z_2}(f_1 + f_2) = P_1 M\ad_{z_2}(f_1 + f_2) + (1-P_1) M\ad_{z_2}(f_1 + f_2),
\]
where $(1-P_1)$ projects $\Sone$ onto $z_1\Sone$. As $(1-P_1) M\ad_{z_2}(f_1 + f_2)$ is thus divisible by $z_1$, we have
\[
M\ad_{z_2}(f_1 + f_2)(0,z_2) = P_1 M\ad_{z_2}(f_1 + f_2)(0,z_2)
\]
and so by uniqueness, $(Df)_2 = P_1 M\ad_{z_2}(f_1 + f_2).$

To establish the formula for $(Df)_1$, note that by \cite[Remark 5.6]{bk2}, $(Df)_1$ is the unique function in $\HKt$ satisfying 
\beq \label{eqn:Dform}
 (Df)_1(z) = \frac{(f_1+f_2)(z) -(f_1+f_2)(0) - z_2 (Df)_2(z)}{z_1}.
\eeq
By the formula in \eqref{fmult}, we have
\[
(f_1 + f_2)(z) = z_2\Madtwo (f_1 + f_2)(z) + z_1\Madone (f_1 + f_2)(z_1, 0) + (f_1+f_2)(0),
\]
and so 
\begin{align*}
(Df)_1(z) &= \frac{f_1(z) + f_2(z) - f_1(0) - f_2(0) - z_2 (Df)_2(z)}{z_1} \\
&= \frac{z_2 M\ad_{z_2} (f_1 + f_2)(z) + z_1 M\ad_{z_1}(f_1 + f_2)(z_1, 0)- z_2 P_1 M\ad_{z_2}(f_1 + f_2)(z)}{z_1} \\
&= M\ad_{z_1}(f_1 + f_2)(z_1, 0) + \frac{z_2 (1 - P_1) M\ad_{z_2}(f_1 + f_2)(z)}{z_1} \\
&= M\ad_{z_1}(f_1 + f_2)(z_1, 0) + z_2\Madone( 1 - P_1) M\ad_{z_2}(f_1 + f_2)(z) \\
&= \Madone\left[(f_1 + f_2)(z_1, 0) + z_2(1-P_1)\Madtwo(f_1 + f_2)(z)\right] \\
&= \Madone\left[(f_1 + f_2)(z_1, 0) + z_2\Madtwo(f_1 + f_2)(z) - z_2 P_1 \Madtwo(f_1 + f_2)(z)\right] \\
&= \Madone\left[(f_1 + f_2)(z) - z_2 P_1 \Madtwo (f_1 + f_2)(z)\right] \hspace{.1in} \text{ (by \eqref{eq1})}\\
&= \Madone(1 - z_2P_1 \Madtwo)(f_1 + f_2)(z),
\end{align*}
where we again used the fact that $(1-P_1)M\ad_{z_2}(f_1 + f_2)$ is divisible by $z_1$. So, we have
\begin{align*}
(Df)_1 &= \Madone(1 - M_{z_2} P_1 \Madtwo)(f_1 + f_2)\\
& = P_2 \Madone (f_1+f_2) + P_2 M_{z_2} \Madone P_1 \Madtwo(f_1 + f_2),
\end{align*}
since $(Df)_1 \in \hilbert(K^{\min}_2)$. By Lemma \ref{lem:bshift}, $ \Madone P_1 \Madtwo(f_1 + f_2) \in S^{\min}_2$. Thus, 
$M_{z_2} \Madone P_1 \Madtwo(f_1 + f_2) \in z_2 S^{\min}_2$ and so, is annihilated by $P_2$. This implies
$
(Df)_1 = P_2\Madone (f_1 + f_2),
$
and establishes the claim.
\end{proof}

\begin{lemma}\label{Cformula} Let $\phi$ be a two-variable inner function with concrete realization \eqref{eqn:AD1}. 
For all $x \in \C$, $C$ is given by
\[
Cx = \bbm P_2 \Madone \phi \\ P_1 \Madtwo \phi \ebm x.
\]
\end{lemma}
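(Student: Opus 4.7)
The plan is to exploit the defining relation for $V$ at an arbitrary $w \in \D^2$ together with Lemma \ref{Dformula}. Since $V$ is an isometry, applying $V^*$ yields
\[
V^* \bbm \cc{\phi(w)} \\ \ktwo \\ \kone \ebm = \bbm 1 \\ \cc{w}_1 \ktwo \\ \cc{w}_2 \kone \ebm.
\]
The $\Hphi$-block of this identity, read off using \eqref{eqn:Udef}, is $C\cc{\phi(w)} + D(\ktwo, \kone) = (\cc{w}_1 \ktwo, \cc{w}_2 \kone)$. Inserting the formula for $D$ from Lemma \ref{Dformula} and solving for $C\cc{\phi(w)}$ reduces the claim to the pair of function-theoretic identities
\[
\cc{w}_1 \ktwo - P_2 \Madone(\ktwo + \kone) = \cc{\phi(w)} P_2 \Madone \phi
\]
together with the analogous one obtained by swapping the indices $1 \leftrightarrow 2$. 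Since $\phi$ is a two-variable inner function, it is nonzero, so $\cc{\phi(w_0)} \ne 0$ for some $w_0 \in \D^2$; linearity of $C : \C \to \Hphi$ then promotes the identity at $w_0$ to the claimed formula for every $x \in \C$.

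To establish the first identity, I would rearrange the Agler model equation \eqref{modeleqn} into
\[
\ktwo + \kone + \cc{\phi(w)}\phi = 1 + z_1 \cc{w}_1 \ktwo + z_2 \cc{w}_2 \kone
\]
and apply $\Madone$. Using $\Madone 1 = 0$ and $\Madone(z_1 f) = f$ for $f \in H^2$, the right-hand side collapses to $\cc{w}_1 \ktwo + z_2 \cc{w}_2 \Madone \kone$. By Lemma \ref{lem:bshift}, $\Madone \kone \in \Stwo$, hence $z_2 \cc{w}_2 \Madone \kone \in z_2 \Stwo$, which is orthogonal to $\HKt = \Stwo \ominus z_2 \Stwo$ and so is annihilated by $P_2$. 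Projecting both sides by $P_2$ therefore produces exactly the desired identity. The second identity follows from the symmetric computation using $\Madtwo$ and $P_1$.

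The hard part is spotting the right algebraic maneuver at the outset: adding $\cc{\phi(w)}\phi$ to $\ktwo + \kone$ in the model equation is the crucial move, because it converts the identity into one that $\Madone$ trivializes modulo a single $z_2$-divisible remainder, and Lemma \ref{lem:bshift} is precisely tuned to discard that remainder under $P_2$.
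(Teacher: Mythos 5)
Your proof is correct, and it takes a genuinely different route from the paper's. The paper proves the formula for $C$ by invoking the same characterization of the blocks of $V^*$ used in the proof of Lemma~\ref{Dformula} (a structured Gleason problem from \cite[Remark~5.6]{bk2}): $(C1)_2$ is the unique element of $\HKo$ with a prescribed restriction to $\{0\}\times\D$, and $(C1)_1$ is determined from the relation $z_1(C1)_1 + z_2(C1)_2 = \phi - \phi(0)$. Your proof instead exploits the isometry property of $V$ directly: applying $V^*$ to the defining relation $V(1, \cc{w}_1 \ktwo, \cc{w}_2\kone)^T = (\cc{\phi(w)}, \ktwo, \kone)^T$ and reading off the $\Hphi$-block yields $C\cc{\phi(w)} = (\cc{w}_1\ktwo, \cc{w}_2\kone)^T - D(\ktwo,\kone)^T$, and then the formula for $D$ from Lemma~\ref{Dformula} together with a $P_j M_{z_i}^*$-projection of the rearranged model equation (driven by Lemma~\ref{lem:bshift}) closes the argument. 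Your route buys a more unified derivation: it pulls the $C$-formula out of the $D$-formula by pure algebra plus one kernel computation, avoiding a second appeal to the Gleason-problem characterization. It is worth noting that the kernel computation you perform is essentially the content of the paper's Proposition~\ref{Dshift} (proved after Lemma~\ref{Cformula}), so in a reorganized exposition your argument would let Proposition~\ref{Dshift} and Lemma~\ref{Cformula} be established simultaneously. Your final nonvanishing step is sound: a two-variable inner function cannot be identically zero, so some $\phi(w_0)\neq 0$, and linearity of $C:\C\to\Hphi$ then upgrades the identity at $w_0$ to all of $\C$.
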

\begin{proof}
The proof is similar to the argument for $D$ in Lemma \ref{Dformula}, so we give a sketch of the idea but omit some of the finer details. By linearity, we can let $x=1$. By \cite[Remark 5.6]{bk2}, $(C1)_2$ is the unique function in $\HKo$ with
\[
(C1)_2(0,z_2) = (\Madtwo \phi)(0,z_2).
\]
Lemma \ref{lem:bshift}, $M_{z_2}^*\phi \in \mathcal{S}^{\max}_1$ and then the same rationale as in Lemma \ref{Dformula} implies that $(C1)_2 = P_1 M_{z_2}^* \phi.$ To handle $(C1)_1$, write 
\[ \phi(z) = z_2 M_{z_2}^* \phi(z) + z_1 M_{z_1}^*\phi (z_1,0) + \phi(0).\]
Then  \cite[Remark 5.6]{bk2} implies that $(C1)_1$ satisfies
\begin{equation} \label{eqn:C1} z_1 (C1)_1(z) +z_2(C1)_2(z) = \phi(z) -\phi(0).\end{equation}
Substituting the formulas for $\phi$ and $(C1)_2$ into \eqref{eqn:C1} and solving for $(C1)_1$ yields
\[ 
\begin{aligned}
(C1)_1(z) &= M_{z_1}^*\phi(z_1,0) + \frac{ z_2 M^*_{z_2}\phi(z)-z_2 P_1 M_{z_2}^*\phi(z)}{z_1} \\
& =  M_{z_1}^*\phi(z_1,0) +  M^*_{z_1} \left(z_2 M^*_{z_2}\phi-z_2 P_1 M_{z_2}^*\phi \right)(z) \\ 
& = M^*_{z_1}\phi(z) - z_2 M^*_{z_1} P_1 M_{z_2}^*\phi(z),
\end{aligned}
\]
where we used the fact that $M_{z_2}^* \phi \in \mathcal{S}^{\max}_1.$ As $(C1)_1 \in \hilbert(K^{\min}_2)$, 
\[ (C1)_1 = P_2M^*_{z_1}\phi - P_2 M_{z_2} M^*_{z_1} P_1 M_{z_2}^*\phi.\]
Then Lemma \ref{lem:bshift} implies that $M^*_{z_1} P_1 M_{z_2}^*\phi \in \mathcal{S}_2^{\min}$ and so, $M_{z_2} M^*_{z_1} P_1 M_{z_2}^*\phi$ is annihilated by $P_2.$  This implies $(C1)_1 = P_2 \Madone \phi$ and completes the proof.
\end{proof}

We now show that the operator $D$ exhibits additional behavior resembling that of the backward shift $M\ad_{z_i}$.

\begin{proposition}\label{Dshift} Let $\phi$ be a two-variable inner function with concrete realization \eqref{eqn:AD1}.  Then for all $w \in \mathbb{D}^2$
\[
D \bbm \ktwo \\ \kone \ebm = \bbm \cc{w_1} \ktwo \\ \cc{w_2}\kone \ebm - \cc{\phi(w)}F, 
\ \ \text{ where } F = \bbm P_2 M\ad_{z_1} \phi \\ P_1 M\ad_{z_2}\phi \ebm.
\]
\end{proposition}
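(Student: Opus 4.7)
My plan is to simply plug the specific element $f = \bbm \ktwo \\ \kone \ebm$ into the formula for $D$ established in Lemma \ref{Dformula} and then use the Agler model equation \eqref{modeleqn} to rewrite $\ktwo + \kone$ in a form amenable to applying $M_{z_1}^*$ and $M_{z_2}^*$.

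Specifically, by Lemma \ref{Dformula},
\[
D\bbm \ktwo \\ \kone \ebm = \bbm P_2 \Madone(\ktwo + \kone) \\ P_1 \Madtwo(\ktwo + \kone) \ebm,
\]
so the task reduces to evaluating these two entries. I would rewrite the model equation \eqref{modeleqn} as the identity
\[
\ktwo(z) + \kone(z) = 1 - \phi(z)\cc{\phi(w)} + \cc{w_1}\,z_1 \ktwo(z) + \cc{w_2}\,z_2 \kone(z).
\]
Applying $\Madone$ termwise and using that $\Madone(1) = 0$, that $\Madone$ commutes with multiplication by $z_2$, and that $\Madone M_{z_1} = 1$ on $H^2$, I obtain
\[
\Madone(\ktwo + \kone) = -\cc{\phi(w)}\,\Madone \phi + \cc{w_1}\,\ktwo + \cc{w_2}\,z_2 \Madone \kone.
\]

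The next step is to project onto $\HKt$. Since $\ktwo \in \HKt$, we have $P_2 \ktwo = \ktwo$. The term that requires care is $\cc{w_2}\,z_2 \Madone \kone$: here I would invoke Lemma \ref{lem:bshift}, which guarantees $\Madone \kone \in S_2^{\min}$, so that $z_2 \Madone \kone \in z_2 S_2^{\min}$, which is annihilated by $P_2$ because $\HKt = S_2^{\min} \ominus z_2 S_2^{\min}$. This leaves
\[
P_2 \Madone(\ktwo + \kone) = \cc{w_1}\,\ktwo - \cc{\phi(w)}\,P_2 \Madone \phi,
\]
which matches the first component of the claimed formula. An entirely symmetric argument (swapping the roles of indices $1$ and $2$, and using the corresponding invariance from Lemma \ref{lem:bshift}) gives the second component.

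The main obstacle is conceptual rather than computational: it is the observation that $P_2$ annihilates the $z_2 \Madone \kone$ term (and analogously for the swapped case), which is exactly where the structural content of Lemma \ref{lem:bshift} is used. Once one has identified that these are the terms that must vanish, the computation is bookkeeping. I would present the $P_2$ computation in detail and then indicate that the $P_1$ computation is symmetric.
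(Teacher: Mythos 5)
Your proposal is correct and follows essentially the same route as the paper: both apply the backward shifts $\Madone,\Madtwo$ to the model equation \eqref{modeleqn}, use Lemma \ref{lem:bshift} to identify which term lies in $z_2 S_2^{\min}$ (respectively $z_1 S_1^{\max}$) and is therefore annihilated by $P_2$ (respectively $P_1$), and then invoke Lemma \ref{Dformula}. The minor difference of whether one rearranges the model equation before or after applying the shift is purely cosmetic.
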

\begin{proof}
By Lemma \ref{lem:bshift},  $M\ad_{z_2} (\kone + \ktwo) \in \Sone$. Rearranging the model equation \eqref{modeleqn} and applying the operator $P_1\Madtwo$ to each side gives
\begin{align}
P_1 \Madtwo [\kone + \ktwo] &= P_1[ z_1 \cc{w_1}\Madtwo\ktwo + \cc{w_2}\kone - \cc{\phi(w)} \Madtwo\phi] \notag \\
&= \cc{w_2}\kone - \cc{\phi(w)}P_1 M\ad_{z_2}\phi, \label{Dkone}
\end{align}
since $z_1 \Madtwo \ktwo \in z_1 S^{\max}_1$ and hence, is annihilated by $P_1$. Similarly,
\begin{align}
P_2 \Madone [\kone + \ktwo] &= P_2[  \cc{w_1}\ktwo + z_2\cc{w_2} \Madone \kone - \cc{\phi(w)} \Madone\phi] \notag \\
&= \cc{w_1}\ktwo - \cc{\phi(w)}P_2 M\ad_{z_1}\phi,  \label{Dktwo} 
\end{align}
since $z_2 \Madone \kone  \in z_2 S^{\min}_2$.
Now applying Lemma \ref{Dformula} to $D \bbm \ktwo \\ \kone \ebm$ and using the expressions in \eqref{Dkone}, \eqref{Dktwo} gives the desired formula.
\end{proof}

\section{Boundary behavior of quasi-rational functions} \label{quasirational}
As in the last section, let $\phi$ be a two-variable inner function on $\mathbb{D}^2$. 
We now examine the behavior of the concrete realization of $\phi$ from \eqref{eqn:AD1} at points on the distinguished boundary $\mathbb{T}^2$. The goal is to show that if $\phi$ extends continuously at part of the boundary, then so does the realization. Equivalently, we want to show that the operator $1 - E_\tau D$ is invertible on some open set of boundary points where $\phi$ is well behaved.  

This problem is generally intractable via current methods, so we restrict to a special class of inner functions. Specifically, we say that a Schur function $\phi$ is \textbf{quasi-rational} with respect to an open $I \subseteq \mathbb{T}$ if $\phi$ is inner and extends continuously to $\T \times I$ with $\abs{\phi(\tau)} = 1$ for $\tau \in \T \times I$.  To get a sense of the definition, recall that every one-variable inner function that extends continuously to $\mathbb{T}$ is a finite Blaschke product. Thus, if $\phi$ is quasi-rational, then for each $\tau_2 \in I$, the one-variable function $\phi(\cdot, \tau_2)$ must be a finite Blaschke product.

\begin{rem} Before proceeding further, one should note that  the set of quasi-rational functions is quite large. To generate  examples, let $p\in \mathbb{C}[z_1, z_2]$ be a polynomial of degree $(m_1, m_2)$ that does not vanish on $\mathbb{D}^2$ and let $\psi = \frac{\tilde{p}}{p}$, where $\tilde{p}(z)= z_1^{m_1} z_2^{m_2} p(1/\bar{z}_1, 1/\bar{z}_2)$. Without loss of generality, we can assume that $p, \tilde{p}$ have no common factors. Then $\psi$ is a rational inner function (and all rational inner functions have this form, see \cite{rud69}) and we can define the set
\[ J_\psi =\left \{ \tau_2 \in \mathbb{T}: \text{ there exists } \tau_1 \in \mathbb{T} \text{ with } p(\tau_1, \tau_2) =0\right\},\]
which contains at most $m_1 m_2$ points.
Now let $\theta$ be any one-variable inner function that extends continuously to an open set $J \subseteq \mathbb{T}$. 
Let $I \subseteq J$ be any open set such that $\theta(I) \subseteq \mathbb{T} \setminus J_\psi$. Then the two-variable function $\phi$ defined by
\[ \phi(z) = \psi\left( z_1, \varphi(z_2) \right),\]
is quasi-rational with respect to $I$. Furthermore, it is immediate that the set of quasi-rational functions with respect to $I$ is closed under finite products.

The class of quasi-rational functions with respect to $I$ is also closed in a stronger sense. Specifically, assume that $(\phi_n)$ is a sequence of quasi-rational functions on $I$ that converges to some function $\phi$ both in the $H^2(\mathbb{D}^2)$ norm and locally uniformly on $\mathbb{D}^2 \cup (\mathbb{T} \times I)$. The first condition implies that the limit function $\phi$ is inner and the second condition implies that $\phi$ extends continuously to $\mathbb{T} \times I$. Thus, $\phi$ is quasi-rational with respect to $I$.
\end{rem}

Then for quasi-rational functions, we prove the following result:

\begin{theorem}\label{invertonI} 
Let $\phi$ be quasi-rational with respect to an open $I\subseteq \mathbb{T}$. Then in the concrete realization \eqref{eqn:AD1}, the operator $1 - E_\tau D$ is invertible for all $\tau \in \T \times I$.
\end{theorem}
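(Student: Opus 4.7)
The plan is to show $1 - E_\tau D$ is boundedly invertible by establishing trivial kernel together with closed dense range, so that the open mapping theorem delivers the bounded inverse. The main tools will be Proposition \ref{Dshift}, the coisometric structure of $V^*$, and the boundary regularity of the model space at $\mathbb{T}\times I$ developed in \cite{bk, bk2}.

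For injectivity, suppose $(1 - E_\tau D) f = 0$. Since $P_1 + P_2 = I$ on $\mathcal{H}_\phi$ and $|\tau_j|=1$, the operator $E_\tau = \tau_1 P_2 + \tau_2 P_1$ is unitary, so $\|Df\| = \|f\|$. The coisometric relation $B^*B + D^*D \le I$ (from $V^*V = I$) then forces $Bf = 0$ and $(I - D^*D) f = 0$; combined with $Df = E_\tau^* f$ this yields $(1 - D^*E_\tau^*) f = 0$, so $f \perp \mathrm{range}(1 - E_\tau D)$. From Proposition \ref{Dshift}, for every $w \in \mathbb{D}^2$,
\[
(1 - E_\tau D)\binom{k_{2,w}^{\min}}{k_{1,w}^{\max}} = \bigl((1 - \tau_1\bar w_1)P_2 + (1 - \tau_2\bar w_2)P_1\bigr)\binom{k_{2,w}^{\min}}{k_{1,w}^{\max}} + \overline{\phi(w)}\, E_\tau F,
\]
which therefore lies in the range. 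Pairing $f$ against this vector and using the reproducing property yields the scalar identity
\[
(1 - \bar\tau_1 w_1) f_1(w) + (1 - \bar\tau_2 w_2) f_2(w) + \langle f, E_\tau F\rangle\, \phi(w) = 0, \qquad w \in \mathbb{D}^2.
\]
Rewriting as the Hardy-space equation $h + c\phi = \bar\tau_1 z_1 f_1 + \bar\tau_2 z_2 f_2$, with $c = \langle f, E_\tau F\rangle$ and $h = f_1 + f_2$, and then projecting onto the summands of $H^2(\mathbb{D}^2) = \mathcal{K}_\phi \oplus \phi H^2$ splits the equation into two: the $\phi H^2$-projection determines $c$, and the $\mathcal{K}_\phi$-projection, combined with $h(0) = 0$ and the wandering-subspace structure of $\mathcal{H}(K_2^{\min})$ and $\mathcal{H}(K_1^{\max})$ from \cite{bk}, should recursively pin down the Taylor coefficients of $f_1, f_2$ and force $f \equiv 0$.

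For closed dense range, the symmetric argument applied to $1 - D^*E_\tau^*$ gives trivial adjoint kernel, so the range of $1 - E_\tau D$ is dense. To upgrade density to closedness, I would pass $w \to \tau \in \mathbb{T}\times I$ in the range identity above. The de Branges--Rovnyak analysis of \cite{bk, bk2} shows that for quasi-rational $\phi$ the kernel vectors $\binom{k_{2,w}^{\min}}{k_{1,w}^{\max}}$ extend continuously in $\mathcal{H}_\phi$-norm to $w = \tau$, producing a boundary kernel $k_\tau \in \mathcal{H}_\phi$ satisfying the limit identity $(1 - E_\tau D) k_\tau = \overline{\phi(\tau)}\, E_\tau F$. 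Combining this with the interior identity and the equicontinuity of the kernels on a neighborhood of $\tau$ yields an explicit bounded preimage formula for $(1 - E_\tau D)^{-1}$ on the dense span of kernel vectors, giving the desired lower bound.

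The main obstacle lies in the two projection arguments. Extracting the individual conclusions $f_1 \equiv 0$ and $f_2 \equiv 0$ from a single scalar identity is subtle — the identity is holomorphic, so a na\"ive Wirtinger-derivative trick fails and one must carefully use the disjoint wandering-subspace decompositions of $\mathcal{H}(K_2^{\min})$ and $\mathcal{H}(K_1^{\max})$. Likewise, making the limit $w \to \tau$ quantitative enough to yield a uniform bound $\|(1 - E_\tau D) f\| \ge c\|f\|$ depends on fine kernel estimates from \cite{bk, bk2}. Both steps hinge essentially on the quasi-rationality hypothesis, which alone supplies enough control of the model space on $\mathbb{T}\times I$ — via the slice-wise finite Blaschke structure of $\phi(\cdot, \tau_2)$ — to execute the argument.
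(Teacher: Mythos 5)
Your high-level plan (trivial kernel, dense range, then a lower bound to close the range) matches the paper's two-part structure, and you correctly identify Proposition \ref{Dshift} and the reproducing-kernel pairing as the way to produce the scalar identity
\[
(1-\bar\tau_1 w_1) g_1(w) + (1-\bar\tau_2 w_2) g_2(w) + \langle g, E_\tau F\rangle\,\phi(w) = 0.
\]
Your opening observation that $(1-E_\tau D)f=0$ forces $Bf=0$ and $(I-D^*D)f=0$ (hence $\ker(1-E_\tau D)\subseteq \ker(1-E_\tau D)^*$) is valid and a nice way to fold injectivity into the density step. However, both of the places you yourself flag as obstacles are genuine gaps, and in each the paper uses a tool you have not identified.

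The missing ingredient is the fact that for quasi-rational $\phi$ and $\tau_2 \in I$, the slice map $J_{\tau_2}:\mathcal{H}(K_2^{\min}) \to \mathcal{K}_{\phi_{\tau_2}}$, $f \mapsto f(\cdot,\tau_2)$, is \emph{unitary} onto the one-variable de Branges--Rovnyak space of the Blaschke product $\phi(\cdot,\tau_2)$ (Lemma \ref{unitarylemma} in the paper). This is what converts pointwise information on the slice $\{z_2=\tau_2\}$ into norm information in $\mathcal{H}_\phi$, and it is the precise mechanism behind your remark about ``slice-wise finite Blaschke structure.'' For dense range, the paper does not project onto $\mathcal{K}_\phi \oplus \phi H^2$ or recursively chase Taylor coefficients. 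Instead it lets $w\to\tau$ (so $\phi(\tau)\ne 0$ kills the constant $\langle g,E_\tau F\rangle$), then sets $w_2=\tau_2$ so that the $g_2$ term vanishes identically, obtaining $g_1(\cdot,\tau_2)\equiv 0$; the unitarity of $J_{\tau_2}$ then gives $\|g_1\|_{\mathcal H_\phi}=0$ at once. Your proposed route through the $\mathcal{K}_\phi \oplus \phi H^2$ projection is not carried out and is not obviously workable: as you note, the identity is holomorphic, and the wandering-subspace structure alone does not split a single scalar relation into two orthogonal constraints.

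The bounded-below step is where the proposal is furthest from a proof. Having a well-defined (even bounded) preimage formula on the dense span of kernel vectors does not on its own yield a uniform lower bound on all of $\mathcal{H}_\phi$; you would need the range of $1-E_\tau D$ on that span to be dense \emph{and} the preimage map to extend boundedly, which is exactly what is in question. The paper instead argues by contradiction with a norm-one sequence $g^n$ for which $\|(1-E_\tau D)g^n\|\to 0$, and the engine of the estimate is again Lemma \ref{unitarylemma} together with the uniform boundedness of point evaluation on $\overline{\mathbb{D}}\times\tilde I$ (from \cite{bk} plus the uniform boundedness principle). One writes $\|g_1^n\|_{\mathcal H_\phi}^2 = \|g_1^n(\cdot,\tau_2)\|_{H^2(\mathbb D)}^2$ and splits the circle integral into a small arc $\mathcal K$ near $\tau_1$, controlled by the uniform point-evaluation bound $\sigma(\mathcal K)C^2$, and the complement, controlled because $|1-\bar\tau_1 z_1|$ is bounded below there and $(1-\bar\tau_1 z_1)g_1^n(z_1,\tau_2)$ is small; a second pass of the same kind handles $g_2^n$. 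Nothing in your outline plays the role of this arc-splitting estimate, and ``equicontinuity of the kernels'' is not a substitute for it without further argument. In short: the skeleton is right, but the two load-bearing steps each rely on Lemma \ref{unitarylemma} and a concrete integral estimate that your proposal does not supply.
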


Before proving the theorem, we prepare some model machinery. 
\begin{observation}\label{boundaryextension}
  By Theorem 1.5 in \cite{bk}, there is an open set $\Omega$ containing $(\overline{\D} \times I)\cup(\T \times \D) \cup \D^2$ on which $\phi$ and all functions in $\HKo$ and $\HKt$ extend to be analytic. Furthermore, point evaluation in $\Omega$ is bounded on these spaces, and the kernels $\Kone, \Ktwo$ extend to be sesquianalytic on $\Omega \times \Omega$. 
\end{observation}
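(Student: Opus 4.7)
The plan is to reduce the observation to Theorem 1.5 of \cite{bk}, which is the substantive analytic-continuation result, and then bootstrap the remaining two claims (bounded point evaluation on $\Omega$ and sesquianalyticity of the reproducing kernels) from standard reproducing kernel Hilbert space theory.

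First I would verify that the hypotheses of [bk, Theorem 1.5] apply to our quasi-rational $\phi$. The three pieces in $(\overline{\D}\times I)\cup (\T\times \D)\cup \D^2$ correspond to three distinct continuation mechanisms: on $\D^2$ the analyticity is built in; across $\T \times I$ one uses Schwarz reflection, which applies because $\phi$ extends continuously with $|\phi|=1$ on $\T \times I$; and across $\T \times \D$ one uses that each slice $\phi(\cdot,\tau_2)$ with $\tau_2 \in I$ is a finite Blaschke product in $z_1$ and hence analytic in a neighborhood of $\overline{\D}$, which combines with analyticity in $z_2$ via a Hartogs-type argument. The statement of Theorem 1.5 of \cite{bk} packages exactly this joint continuation for $\phi$ and, more substantively, for every element of $\HKo$ and $\HKt$ (the proof there exploits that these model spaces are built out of $\phi$ and the backward shifts).

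Next I would handle bounded point evaluation. For $z \in \D^2$ the evaluation functional $f \mapsto f(z)$ on $\HKt$ is represented by $\langle f, \Ktwo(\cdot,z)\rangle$. For general $z \in \Omega$, the analytic continuation of $f$ to $\Omega$ defines a candidate value $\tilde f(z)$. Uniqueness of analytic continuation identifies $\tilde f(z)$ with the ordinary value when $z \in \D^2$, and a closed graph argument (using that $\HKt$-norm convergence forces locally uniform convergence on $\D^2$ and hence, via the continuation in [bk, Theorem 1.5], on compacta of $\Omega$) shows $f \mapsto \tilde f(z)$ is bounded on $\HKt$ for every $z \in \Omega$. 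This produces a reproducing element $k_z \in \HKt$ with $\tilde f(z) = \langle f, k_z\rangle$, and an identical argument handles $\HKo$.

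Finally, for sesquianalyticity of $\Kone, \Ktwo$ on $\Omega \times \Omega$, fix any orthonormal basis $\{e_n\}$ of $\HKt$ and expand
\[ \Ktwo(z,w) = \sum_n e_n(z)\overline{e_n(w)}. \]
Each $e_n$ extends analytically on $\Omega$, so the series is a sum of functions that are analytic in $z$ and antianalytic in $w$. The main technical point will be establishing locally uniform convergence on $\Omega \times \Omega$, which I expect to be the main obstacle and which I would obtain from the Cauchy–Schwarz bound $|\sum_{n\ge N} e_n(z)\overline{e_n(w)}|^2 \le (\sum_{n\ge N}|e_n(z)|^2)(\sum_{n\ge N}|e_n(w)|^2)$ together with the locally uniform bound $\|k_z\|^2 = \Ktwo(z,z)$ furnished by the closed graph step; an identical argument handles $\Kone$.
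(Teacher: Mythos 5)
The paper gives no argument here beyond the citation: all three assertions---the continuation of $\phi$ and of the elements of $\HKo$ and $\HKt$, the locally uniform boundedness of point evaluation, and the sesquianalytic extension of $\Kone,\Ktwo$---are read off directly from Theorem 1.5 of \cite{bk}. Your plan (cite that theorem for the continuation, then bootstrap the remaining two claims from RKHS generalities) is compatible with this, but your heuristic for the continuation across $\T\times\D$ is not right, and it matters because that is exactly the nontrivial content of the cited theorem. Knowing that $\phi(\cdot,\tau_2)$ is a finite Blaschke product for each $\tau_2\in I$ gives analyticity in $z_1$ past $\T$ only for $z_2$ near $I$; it says nothing at a point $(\tau_1,z_2)$ with $z_2\in\D$ away from $I$, and a Hartogs-type forced extension fails here because the circles $\{|z_1|=1+\epsilon\}\times\{z_2\}$ leave the known domain of analyticity once $z_2$ leaves a neighborhood of $I$. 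The real mechanism is structural: one must show that rationality in $z_1$ on the slices over $I$ propagates (with bounded degree and stable denominator) to every slice over $\D$, and likewise for the model space elements. That argument lives in \cite{bk} and should not be glossed as routine hypothesis-checking.

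The closed graph step is also circular as written: you justify closedness by asserting that norm convergence in $\HKt$ forces locally uniform convergence ``on compacta of $\Omega$,'' which is precisely the conclusion you are after (note that $\|z^n/n\|_{H^2}\to 0$ while $z^n/n$ blows up on any larger disc, so extendability alone does not upgrade norm convergence). The fix is to run the closed graph theorem for the extension map $T:\HKt\to\mathcal{O}(\Omega)$ into the Fr\'echet space of holomorphic functions on the relevant connected component of $\Omega$: if $f_n\to f$ in $\HKt$ and $Tf_n\to g$ locally uniformly on $\Omega$, then $g=f$ on $\D^2$ and hence $g=Tf$ by the identity theorem, so $T$ is continuous and $\sup_{z\in K}\norm{k_z}<\infty$ for compact $K\subseteq\Omega$. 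Granting that bound, your Cauchy--Schwarz estimate controls the kernel series, but to conclude sesquianalyticity you should finish with Vitali's theorem (or separate analyticity plus local boundedness of $(z,w)\mapsto\ip{k_w}{k_z}$) rather than asserting locally uniform convergence of the tails, since $z\mapsto\Ktwo(z,z)$ is not yet known to be continuous at that stage.
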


Observe that for $\tau_2 \in I$, the one-variable inner function $\phi_{\tau_2} = \phi(\cdot, \tau_2)$ is well defined and possesses an associated one-variable reproducing kernel Hilbert space defined by
\[
 \mathcal{K}_{\phi_{\tau_2}} = \mathcal{H}\left[ \frac{1 - \phi_{\tau_2}(z_1)\cc{\phi_{\tau_2}(w_1)}}{1 - z_1 \cc{w_1}}\right],
\]
which is a subspace of the one-variable Hardy space $H^2(\mathbb{D})$. 
We connect these $ \mathcal{K}_{\phi_{\tau_2}}$ to the subspaces associated to our realizations via the following lemma.

\begin{lemma}\label{unitarylemma} Let $\phi$ be quasi-rational with respect to an open $I\subseteq \mathbb{T}$. 
 Then the map $J_{\tau_2}: \HKt\to \mathcal K_{\phi_{\tau_2}}$ defined by
$
  J_{\tau_2}f = f(\cdot, \tau_2)
 $
 is unitary for all $\tau_2 \in I$.
\end{lemma}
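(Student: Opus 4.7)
The plan is to realize $J_{\tau_2}$ as a unitary of reproducing kernel Hilbert spaces via a kernel-matching argument, the key input being the Agler model equation \eqref{modeleqn} restricted to the boundary slice $z_2 = w_2 = \tau_2$. Although $K_2^{\min}$ and the model-space kernel $K^{\phi_{\tau_2}}$ need not agree in the interior of $\D^2$, on this slice they do, because the $K_1^{\max}$ term carries a factor of $1 - z_2\overline{w_2}$ that vanishes when $|\tau_2|^2 = 1$.

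First, I would use Observation \ref{boundaryextension} to see that $J_{\tau_2}$ is well defined (every $f \in \HKt$ extends analytically to $\Omega \supseteq \overline{\D} \times I$) and that, for each $w_1 \in \D$, the kernel $k_{2,(w_1,\tau_2)}^{\min}$ is a bona fide element of $\HKt$. Evaluating \eqref{modeleqn} at $z_2 = w_2 = \tau_2$ then gives the identity
\[
K_2^{\min}\bigl((z_1, \tau_2), (w_1, \tau_2)\bigr) \;=\; \frac{1 - \phi_{\tau_2}(z_1)\,\overline{\phi_{\tau_2}(w_1)}}{1 - z_1 \overline{w_1}} \;=\; K^{\phi_{\tau_2}}(z_1, w_1),
\]
from which I would conclude that $J_{\tau_2}\bigl(k_{2,(w_1,\tau_2)}^{\min}\bigr) = K^{\phi_{\tau_2}}_{w_1}$ and that these kernel functions have matching norms (and, by polarization, matching inner products) in the two spaces. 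Since $\phi_{\tau_2}$ is a finite Blaschke product by quasi-rationality, $\mathcal K_{\phi_{\tau_2}}$ is finite-dimensional and its kernel functions already span it, so the adjoint $J_{\tau_2}^*$ extends from the kernel span to an isometric embedding $\mathcal K_{\phi_{\tau_2}} \hookrightarrow \HKt$. This already makes $J_{\tau_2}$ a coisometry onto $\mathcal K_{\phi_{\tau_2}}$.

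The main obstacle is to upgrade the coisometry to a unitary, i.e., to verify that $\{k_{2,(w_1,\tau_2)}^{\min} : w_1 \in \D\}$ is total in $\HKt$, or equivalently that $J_{\tau_2}$ is injective. The isometric embedding $J_{\tau_2}^*$ already gives the lower bound $\dim \HKt \geq \deg \phi_{\tau_2}$. For the reverse inequality I would exploit the quasi-rational hypothesis directly: the degree of the finite Blaschke product $\phi_{\tau_2}$ is locally constant in $\tau_2$ (as the winding number of a continuous family on $\T$), and I would analyze the wandering subspace $\HKt = S_2^{\min} \ominus z_2 S_2^{\min}$ using the backward-shift inclusions in Lemma \ref{lem:bshift} together with the explicit $D$-action on kernels in Proposition \ref{Dshift} to bound $\dim \HKt$ above by $\deg \phi_{\tau_2}$. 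Once the two dimensions agree, a coisometry between equal-dimensional Hilbert spaces is automatically unitary, which finishes the proof.
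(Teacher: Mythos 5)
Your kernel-matching step is correct and is in fact a valid alternative to part of the paper's argument: setting $z_2 = w_2 = \tau_2$ in the model equation \eqref{modeleqn} does kill the $K_1^{\max}$ term, and the resulting identity $K_2^{\min}((z_1,\tau_2),(w_1,\tau_2)) = K^{\phi_{\tau_2}}(z_1,w_1)$ shows that the assignment $K^{\phi_{\tau_2}}_{w_1}\mapsto k^{\min}_{2,(w_1,\tau_2)}$ preserves inner products of kernel functions, hence extends to an isometric embedding of $\mathcal K_{\phi_{\tau_2}}$ into $\HKt$. Up to here you have recovered the paper's surjectivity half and the fact that $J_{\tau_2}$ restricted to the closed span of the boundary kernels is unitary onto $\mathcal K_{\phi_{\tau_2}}$.

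The genuine gap is exactly where you flag it: showing the boundary kernels $\{k^{\min}_{2,(w_1,\tau_2)} : w_1\in\D\}$ are total in $\HKt$, or equivalently that $J_{\tau_2}$ is injective and isometric on all of $\HKt$. Your proposed dimension count does not close this gap. First, it is not a priori clear that $\HKt$ is finite-dimensional for a general quasi-rational $\phi$ --- quasi-rationality only controls the slice $\phi(\cdot,\tau_2)$, not the global structure of $\K_\phi$, and indeed the finite-dimensionality of $\HKt$ is a \emph{consequence} of the lemma, not an available input. Second, neither Lemma \ref{lem:bshift} nor Proposition \ref{Dshift} gives any rank or dimension control on $\HKt$; they describe how $M_{z_j}^*$ and $D$ move kernels between subspaces, but nothing there caps $\dim\HKt$ by $\deg\phi_{\tau_2}$. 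Third, $\HKt$ is a fixed space independent of $\tau_2$, so a bound $\dim\HKt\le\deg\phi_{\tau_2}$ for each $\tau_2\in I$ would already force local constancy of the degree, which you are simultaneously trying to use as an input --- the argument is circular as sketched.

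The paper avoids this issue entirely by a Fourier-coefficient argument: for $f,g\in\HKt$, the function $F_{f,g}(z_2)=\langle f(\cdot,z_2),g(\cdot,z_2)\rangle_{L^2(\T)}$ lies in $L^1(\T)$, and the wandering-subspace condition $\HKt\perp z_2^j\HKt$ for $j\ne 0$ forces all nonzero Fourier coefficients of $F_{f,g}$ to vanish, so $F_{f,g}$ is the constant $\langle f,g\rangle_{\HKt}$. Analytic continuation (Observation \ref{boundaryextension}) then gives $\langle f(\cdot,\tau_2),g(\cdot,\tau_2)\rangle_{H^2(\D)} = \langle f,g\rangle_{\HKt}$ for every $\tau_2\in I$, establishing isometry of $J_{\tau_2}$ on \emph{all} of $\HKt$ in one stroke, with no totality question remaining. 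This orthogonality structure of the wandering subspace, rather than the quasi-rational degree bound, is the mechanism your proposal is missing.
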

\begin{proof} 
 This proof uses ideas from the proofs of \cite[Theorem 1.6]{bk} and \cite[Theorem 2.2]{bg17}.  For this proof, one should recall that $\HKt$ is a subspace of $H^2(\mathbb{D}^2)$ and $\K_{\phi_{\tau_2}}$ is a subspace of $H^2(\mathbb{D})$. 

 We first claim that for $\tau_2 \in I$, the restriction map $J_{\tau_2}: \HKt\to H^2(\mathbb{D})$ preserves inner products.   
Fix functions $f, g \in \HKt$. Then for almost every $z_2 \in \mathbb{T}$, $f(\cdot, z_2), g(\cdot, z_2) \in L^2(\mathbb{T})$ and we can define
 \beq\label{Ffg}
  F_{f,g}(z_2) =  \ip{f(\cdot, z_2)}{g(\cdot, z_2)}_{L^2(\T)}. \eeq
Let $\sigma$ denote normalized Lebesgue measure. Then by H\"older's inequality, we obtain
\[ \int_{\T} |  F_{f,g}(z_2) | d \sigma(z_2) \le \int_{\T} \| f(\cdot, z_2) \|_{L^2(\mathbb{T})}  \| g(\cdot, z_2) \|_{L^2(\mathbb{T})} d \sigma(z_2) \le \|f \|_{H^2} \|g\|_{H^2}, \]
which implies $F_{f,g} \in L^1(\T)$. Since $f, g \in \HKt$ and $\HKt \perp_{H^2} z_2 \HKt$, we have
 \[
  f \perp_{L^2} z_2^j g \text{ for all } j \in \Z/\{0\}.
 \]
 Then the Fourier coefficients of $F_{f,g}$ for $j \in \Z/\{0\}$ are given by 
 \[
  \widehat{F_{f,g}}(-j) = \int_{\T} z_2^j F_{f,g}(z_2) \, d\sigma(z_2) = \int_{\T^2} z_2^j f(z) \cc{g(z)} \, d\sigma(z) = 0,
 \]
 and so it is straightforward that
 \[
  F_{f,g}(z_2) = \widehat{F_{f,g}}(0) = \ip{f}{g}_{\HKt} \text{ for a.e. } z_2 \in \T.
 \]
 By Observation  \ref{boundaryextension}, $f$ and $g$ are analytic on an open set $\Omega$ containing $\overline{\D} \times I$, which implies both that for every $\tau_2 \in I$, $f(\cdot, \tau_2), g(\cdot, \tau_2) \in H^2(\mathbb{D})$ and
 the  formula for $F_{f,g}$ in \eqref{Ffg} is well defined and continuous on $I$. This immediately gives
 \[
  \ip{f(\cdot, \tau_2)}{g(\cdot, \tau_2)}_{H^2(\mathbb{D})} = F_{f,g}(\tau_2) = \widehat{F_{f,g}}(0) = \ip{f}{g}_{\HKt},
 \]
so the restriction map preserves inner products for each $\tau_2 \in I$.
 
 To finish the proof, we need to show $J_{\tau_2}$ maps onto $\K_{\phi_{\tau_2}}$. By Observation \ref{boundaryextension}, for any $\tau_2 \in I$, we can let $z_2, w_2 \to \tau_2$ in the model equation \eqref{modeleqn} to obtain
\[
  \frac{1 - \phi_{\tau_2}(z_1)\cc{\phi_{\tau_2}(w_1)}}{1 - z_1 \cc{w_1}} = K_2^{\min}((z_1, \tau_2), (w_1, \tau_2)) = J_{\tau_2}\left(  k^{\min}_{2,(w_1, \tau_2)} \right) (z_1).
\]
 To show the range of $J_{\tau}$ is in $\K_{\phi_{\tau_2}}$, assume that $f \in \hilbert(K^{\min}_2)$ and $J_{\tau_2} f \perp 
  \K_{\phi_{\tau_2}}$ in $H^2(\mathbb{D})$. Since $J_{\tau_2}$ preserves inner products, this implies that for all $w_1 \in \mathbb{D}$
  \[ 0 =   \ip{f(\cdot, \tau_2)}{ k^{\min}_{2,(w_1, \tau_2)} (\cdot, \tau_2)}_{H^2(\mathbb{D})} =  \ip{f}{k^{\min}_{2,(w_1, \tau_2)}}_{\hilbert(K^{\min}_2)}  =f(w_1, \tau_2).\]
  Since $J_{\tau_2}$ preserves norms, $f \equiv 0$ and so $J_{\tau_2}$  maps into  $\K_{\phi_{\tau_2}}$.
  
Finally, as  $J_{\tau_2}$  preserves norms and its range contains all of the reproducing kernel functions of $\K_{\phi_{\tau_2}}$, $J_{\tau_2}$ must be surjective. 
\end{proof}

We are now ready to prove the main theorem.

\begin{proof}[Proof of Theorem \ref{invertonI}] 
Fix $(\tau_1, \tau_2) \in \mathbb{T} \times I.$ The proof has two parts. We start by showing that the operator $(1 - E_\tau D)$ has dense range and then we will show $(1 - E_\tau D)$ is bounded below.

First, proceed by contradiction and assume that $(1 - E_\tau D)$ does not have dense range. Recall that Proposition \ref{Dshift} implies
\beq\label{range}
 (\id - E_\tau D) \bbm \ktwo \\ \kone \ebm = \bbm (1 - \tau_1\cc{w_1})\ktwo \\ (1 - \tau_2\cc{w_2})\kone \ebm + \cc{\phi(w)}E_\tau F,
\eeq
where $F$ is defined in Proposition \ref{Dshift}. Then there must exist some non-trivial $g \in \Hphi$ orthogonal to all functions with the form given in \eqref{range}. Writing 
\[ g = \bbm g_1 \\ g_2 \ebm \ \text{ for $g_1 \in \mathcal{H}(K^{\min}_2),$ and $g_2 \in  \mathcal{H}(K^{\max}_1),$} \]
we can compute
\[
\begin{aligned}
 0 &= \ip{g}{(1 - E_\tau D) \bbm \ktwo \\ \kone \ebm}_{\Hphi} \\
 &= (1 - \cc{\tau_1}w_1)g_1(w) + (1 - \cc{\tau_2}w_2)g_2(w) + \phi(w)\ip{g}{E_\tau F}_{\Hphi},
 \end{aligned}
\]
for all $w\in \mathbb{D}^2$. By Observation \ref{boundaryextension}, we can take $w \to \tau$  and reduce this to
\[
 0 = \phi(\tau) \ip{g}{E_\tau F}_{\Hphi}
\]
and so  $\ip{g}{E_\tau F}_{\Hphi} = 0$. Then 
\beq\label{gdance}
 (1 - \cc{\tau_1} w_1)g_1(w) = - (1 - \cc{\tau_2}w_2)g_2(w) 
 \eeq
and in particular, by Observation \ref{boundaryextension}, we can take limits to points in $\mathbb{D} \times I$ to conclude
$
 g_1(z_1, \tau_2) = 0$ for all $z_1 \in \D.$
Since $g_1 \in \HKt$, an application of Lemma \ref{unitarylemma} implies that
\[
 \norm{g_1}_{\HKt} = \norm{g_1(\cdot, \tau_2)}_{\mathcal{K}_{\phi_{\tau_2}}} = 0.
\]
Thus $g_1 \equiv 0$, and by \eqref{gdance} we also have $g_2 \equiv 0$. Then $g \equiv 0$, which is a contradiction. We conclude that $(1 - E_\tau D)$ has dense range.

Now we show that $(1 - E_\tau D)$ is bounded below. Proceeding by contradiction, assume that $(1 - E_\tau D)$ is not bounded below. Then there is a sequence of functions $\{g^n\} \subset \Hphi$ such that $\norm{g^n}_{\Hphi} = 1$ and $\lim_{n\to \infty} \norm{(1 - E_\tau D) g^n}_{\Hphi} = 0$. 

Let $\tilde{I}$ be a closed interval in $I$ containing $\tau_2$ in its interior. By Observation \ref{boundaryextension} and the uniform boundedness principle, point evaluation in $\Hphi$ is uniformly bounded on $K:=\overline{\D} \times \tilde{I}$. That is, there is a $C >0$ so that
\begin{equation} \label{eqn:Cestimate}
 \abs{f(z)} \leq C \norm{f}_{\Hphi} \text{ for all } z \in K, f \in \Hphi.
\end{equation}
Furthermore, for $i=1,2$, this implies that for $z \in K$,
\[  | z_i (D g^n)_i(z) -\overline{\tau_i} z_i g_i^n(z) | 
=| z_i(1-E_{\tau}Dg^n)_i(z) | \le  C\|   (1-E_{\tau}Dg^n) \|_{\Hphi}.\] 
By \eqref{eqn:Dform}, we can conclude that 
\[ 
\begin{aligned}
 | g_1^n(z) &- \cc{\tau_1}z_1 g_1^n(z) + g_2^n(z) - \cc{\tau_2}z_2 g_2^n(z) - g_1^n(0) - g_2^n(0)| \\
&=  | z_1 (D g^n)_1(z) -\overline{\tau_1} z_1 g_1^n(z) + z_2 (D g^n)_2(z) -\overline{\tau_2} z_2 g_2^n(z)|  \\
&\le 2C \| (1-E_{\tau} D)g^n \|_{\Hphi}  \quad \text{ for all } z \in K.
\end{aligned}
\]
Setting $z=\tau$, we have $|g_1^n(0) + g_2^n(0)| \le 2C \| (1-E_{\tau} D)g^n \|_{\Hphi} $ and so 
\[ | (1- \cc{\tau_1}z_1) g_1^n(z) +(1- \cc{\tau_2}z_2) g_2^n(z) | \le 4C \| (1-E_{\tau} D)g^n \|_{\Hphi}, \]
for all $z \in K.$ In particular, setting $z_2 = \tau_2$ gives
\begin{equation} \label{eqn:g1estimate} | (1- \cc{\tau_1}z_1) g_1^n(z_1, \tau_2)| \le  4C \| (1-E_{\tau} D)g^n \|_{\Hphi},\end{equation}
for all $z_1 \in \overline{\D}$.  We can use this to deduce that $\| g_1^n \|_{\Hphi} \rightarrow 0$. First, fix a small $\epsilon>0$ and letting $\sigma$ denote normalized Lebesgue measure on $\T$ (or $\T^2$, depending on the context) choose a compact interval $\K \subseteq \mathbb{T}$ centered at $\tau_1$ such that $\sigma(\K) = \epsilon$. Then $\text{dist}(  \T \setminus \mathcal{K}, \tau_1) = \epsilon/2.$ Then by Lemma \ref{unitarylemma} and equations  \eqref{eqn:Cestimate}, \eqref{eqn:g1estimate}, we have
\[ 
\begin{aligned}
\| g _1^n\|^2_{\Hphi} & = \| g_1^n(\cdot, \tau_2) \|_{H^2}^2 \\
&= \int_{\K} |g^n_1(z_1, \tau_2)|^2d  \sigma(z_1) +  \int_{\T \setminus \K} \frac{|(1-\overline{\tau}_1z_1)g^n_1(z_1, \tau_2)|^2}{|z_1-\tau_1|^2} d  \sigma(z_1)  \\
&\le \sigma(\K) C^2 \|g^n_1\|^2_{\Hphi} + \frac{16C^2}{ \text{dist}(  \T \setminus \mathcal{K}, \tau_1)^2 }    \| (1-E_{\tau} D)g^n \|^2_{\Hphi}  \\
& \le \epsilon C^2 + \frac{64C^2}{\epsilon^2}  \| (1-E_{\tau} D)g^n \|^2_{\Hphi}. 
\end{aligned}
\]
Choose $N$ such that for all $n \ge N$, the latter term is less than $\epsilon.$ This shows $\| g _1^n\|_{\Hphi} \rightarrow 0.$ 

Now, consider $g_2^n$. By our original assumptions and the fact that  $\| g _1^n\|_{\Hphi} \rightarrow 0,$ we can conclude that 
\[ \|g_2^n\|_{\Hphi} \rightarrow 1 \ \text{ and } \|(1 - E_{ \tau }D)g^n_2 \|_{\Hphi} \rightarrow 0.\]
Examining the first component in the second limit yields
\[ \| \tau_1 (Dg_2^n)_1 \|_{\Hphi} \rightarrow 0, \text{ and so } \| M_{z_1}  (Dg_2^n)_1 \|_{H^2} \rightarrow 0\]
and similarly, examining the second component yields
\[ \| g_2^n - \tau_2 (Dg_2^n)_2\|_{\Hphi} \rightarrow 0, \text{ and so } \| M_{z_2} \left( \overline{\tau_2} g^n_2 - (Dg_2^n)_2\right) \| _{H^2} \rightarrow 0.\]
Thus \eqref{eqn:Dform} implies that 
\[ \| (1-M_{z_2}\overline{\tau_2})g^n_2 - g^n_2(0) \| _{H^2} = \| M_{z_1}  (Dg_2^n)_1  + M_{z_2}\left( (Dg_2^n - \overline{\tau_2} g^n_2)_2\right) \|_{H^2}  \rightarrow 0.\]
From earlier in the argument, we know that $|g^n_1(0) + g^n_2(0) | \rightarrow 0$ and $\| g^n_1\|_{H^2} \rightarrow 0$. This implies that  $g^n_2(0) \rightarrow 0$ and so,
\begin{equation} \label{eqn:g2Estimate} \| (1-M_{z_2}\overline{\tau_2})g^n_2\| _{H^2} \rightarrow 0.\end{equation}
 We claim that this implies $\|g^n_2\|_{\Hphi} \rightarrow 0.$ To see this, fix a small $\epsilon >0$ and $\K \subseteq \tilde{I}$ a compact interval centered at $\tau_2$ with $\sigma(\K) = \epsilon$. Note that such a $\K$ exists for $\epsilon$ sufficiently small. Then $\text{dist}(  \T \setminus \mathcal{K}, \tau_2) = \epsilon/2.$ Then by \eqref{eqn:Cestimate} and \eqref{eqn:g2Estimate},
 \[ 
\begin{aligned}
\| g _2^n\|^2_{\Hphi}
&= \int_{\T\times \K} |g^n_2(z)|^2d  \sigma(z)+ \int_{\T \times (\T \setminus \K)} \frac{|(1-\overline{\tau_2} z_2)g^n_2(z)|^2}{|z_2-\tau_2|^2} d  \sigma(z) \\
&\le \sigma(\K) C^2 \|g^n_2\|^2_{\Hphi} + \frac{1}{ \text{dist}(  \T \setminus \mathcal{K}, \tau_2)^2 } \| (1-z_2\overline{\tau_2})g^n_2\|^2 _{H^2} \\
& \le \epsilon C^2 + \frac{4}{\epsilon^2}  \| (1-M_{z_2}\overline{\tau_2})g^n_2\|^2_{H^2}.
\end{aligned}
\]
Choose $N$ such that for all $n \ge N$, the latter term is less than $\epsilon.$ This shows $\| g _2^n\|_{\Hphi} \rightarrow 0,$ a contradiction, which completes the proof.
\end{proof} 

It seems plausible that Theorem \ref{invertonI} should hold if $\phi$ is inner and extends continuously to $I_1\times I_2$ for open sets $I_1, I_2 \subseteq \mathbb{T}.$ While we have not been able to prove this, we can show that the $(\id-E_\tau D)$ operators have dense range.

\begin{proposition} \label{prop:range}
 Let $\phi$ be an inner function on $\mathbb{D}^2$ and assume $\phi$ extends continuously to $I_1\times I_2$ for open sets $I_1, I_2 \subseteq \mathbb{T}.$ Then, for each $\tau \in I_1 \times I_2$, the operator $(\id-E_\tau D)$ has dense range. 
\end{proposition}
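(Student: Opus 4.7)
My plan is to argue by contradiction, following the blueprint of the first half of the proof of Theorem \ref{invertonI}, but replacing the quasi-rational tools (Observation \ref{boundaryextension} and Lemma \ref{unitarylemma}) by reproducing-kernel estimates derived directly from the model equation \eqref{modeleqn}, which remain available under only continuous extension. Suppose $(\id - E_\tau D)$ does not have dense range, and pick a nonzero $g = (g_1, g_2) \in \Hphi$ orthogonal to $(\id - E_\tau D)\bbm \ktwo \\ \kone \ebm$ for every $w \in \D^2$. Unwinding this orthogonality with Proposition \ref{Dshift} and the reproducing property gives, exactly as in the proof of Theorem \ref{invertonI}, the pointwise identity
\[
(1-\cc{\tau_1}w_1)\,g_1(w) + (1-\cc{\tau_2}w_2)\,g_2(w) + c\,\phi(w) = 0 \qquad (w\in\D^2),
\]
where $c = \cc{\tau_1}\ip{g_1}{P_2 \Madone\phi}_{\HKt} + \cc{\tau_2}\ip{g_2}{P_1 \Madtwo\phi}_{\HKo}$ is a fixed scalar.

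The first step is to force $c = 0$. Since $g_1 + g_2 \in \K_\phi$, whose reproducing kernel is $K_{\K_\phi}(z,w) = (1-\phi(z)\cc{\phi(w)})/((1-z_1\cc{w_1})(1-z_2\cc{w_2}))$, Cauchy--Schwarz at $w = r\tau$ yields
\[
(1-r)^2\,|(g_1+g_2)(r\tau)|^2 \le \|g_1+g_2\|_{\K_\phi}^2\,\frac{1-|\phi(r\tau)|^2}{(1+r)^2}.
\]
Continuity of $\phi$ at $\tau$ with $|\phi(\tau)| = 1$ makes the right-hand side tend to $0$ as $r \nearrow 1$, so $(1-r)(g_1+g_2)(r\tau) \to 0$. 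Substituting $w = r\tau$ into the identity and letting $r \nearrow 1$ forces $c\,\phi(\tau) = 0$ and hence $c = 0$.

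With $c = 0$ we are reduced to the two-term identity $(1-\cc{\tau_1}w_1)g_1(w) = -(1-\cc{\tau_2}w_2)g_2(w)$ on $\D^2$. Rearranging \eqref{modeleqn} further yields the kernel bounds $K_2^{\min}(z,z) \le (1-|\phi(z)|^2)/(1-|z_1|^2)$ and $K_1^{\max}(z,z) \le (1-|\phi(z)|^2)/(1-|z_2|^2)$. A direct Cauchy--Schwarz estimate using $1-|\phi|^2 \le 1$ then gives $(1-r)\,g_2(w_1, r\tau_2) \to 0$ as $r\nearrow 1$ uniformly in $w_1 \in \D$, requiring only $|\phi| \le 1$ and no continuity of $\phi$ at $(w_1,\tau_2)$. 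Plugging $w = (w_1, r\tau_2)$ into the reduced identity and dividing by the non-vanishing factor $1 - \cc{\tau_1}w_1$ yields $g_1(w_1, r\tau_2) \to 0$ for every $w_1 \in \D \setminus \{\tau_1\}$; the symmetric argument at $w = (r\tau_1, w_2)$ gives $g_2(r\tau_1, w_2) \to 0$ for every $w_2 \in \D \setminus \{\tau_2\}$.

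The hardest part will be passing from these slice-wise radial boundary vanishings to $g_1 = g_2 = 0$ in $\Hphi$. Without the quasi-rational hypothesis, the restriction map $J_{\tau_2}\colon\HKt \to \K_{\phi_{\tau_2}}$ is no longer known to be unitary, so Lemma \ref{unitarylemma} is unavailable to equate $\|g_1\|_{\HKt}$ with a slice norm and conclude immediately. My plan is to combine the pointwise boundary decay above with the wandering-subspace identities $g_1 \perp z_2 \Stwo$ and $g_2 \perp z_1 \Sone$ (coming from $\HKt = \Stwo \ominus z_2\Stwo$ and $\HKo = \Sone \ominus z_1\Sone$) through a careful $L^2(\T^2)$ boundary-integration argument, to force $g_1$ and $g_2$ to vanish and deliver the contradiction. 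This last step is the main technical obstacle, in contrast to the quasi-rational case where Lemma \ref{unitarylemma} immediately closes the argument.
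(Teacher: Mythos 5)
Your reduction to the identity $(1-\cc{\tau_1}w_1)\,g_1(w) = -(1-\cc{\tau_2}w_2)\,g_2(w)$ on $\mathbb{D}^2$ is sound, and the kernel-estimate route to $c=0$ is a clean elementary substitute for the analytic continuation that the paper invokes (Theorem $1.5$ of \cite{bk}) at that step: the diagonal bounds $K_2^{\min}(z,z)\le (1-|\phi(z)|^2)/(1-|z_1|^2)$ and $K_1^{\max}(z,z)\le (1-|\phi(z)|^2)/(1-|z_2|^2)$ and the ensuing radial-slice decay are all correct.

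However, the gap you flag at the end is a genuine one, and the sketch you offer for closing it does not work. The pointwise decay $g_1(w_1,r\tau_2)\to 0$ lives only on the single slice $z_2=\tau_2$, so it carries no $L^2(\mathbb{T}^2)$ information, and the wandering-subspace orthogonality $g_1\perp z_2 \Stwo$ on its own cannot upgrade a pointwise statement along a measure-zero set into $\|g_1\|_{H^2}=0$. The paper's proof does something qualitatively different at this point: from the two-term identity it defines $f = g_1/(1-\cc{\tau_2}w_2) = -g_2/(1-\cc{\tau_1}w_1)$, uses the analytic continuation of $g_1,g_2$ to $\Omega$ together with the fact that holomorphic functions of two variables have no isolated singularities to conclude that $f$ is holomorphic on $\Omega$ and hence bounded near $\tau$, deduces $f\in H^2(\mathbb{D}^2)$, and finally uses the mutual orthogonality of the terms in $f=\sum_{n\ge 0}\cc{\tau_2}^n w_2^n g_1$ to force $\|f\|_{H^2}\ge \sqrt{N}\|g_1\|_{H^2}-\|f\|_{H^2}$ for all $N$, hence $g_1\equiv 0$ and then $g_2\equiv 0$. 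The $H^2$-membership of $f$ is exactly the ingredient your approach cannot supply, because you have set aside the analytic extension of $g_1,g_2$ that makes $f$ bounded near $\tau$; without it there is no finite-norm object for the wandering-subspace orthogonality to act on.
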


\begin{proof} Fix $(\tau_1, \tau_2) \in I_1\times I_2$. As in the proof of Theorem \ref{invertonI}, assume that $g \in \mathcal{H}_{\phi}$ is orthogonal to the range of $\id-E_{\tau}D$. Write 
\[ g = \bbm g_1 \\ g_2 \ebm \ \text{ for $g_1 \in \mathcal{H}(K^{\min}_2),$ and $g_2 \in  \mathcal{H}(K^{\max}_1).$} \]
Then one can basically follow the proof of Theorem \ref{invertonI}, but directly apply Theorem $1.5$ in \cite{bk}, to conclude that 
\[ 
 (1 - \cc{\tau_1} w_1)g_1(w) = - (1 - \cc{\tau_2}w_2)g_2(w) \]
for all $w\in \Omega$, where $\Omega$ is an open set containing $\mathbb{D}^2 \cup (I_1 \times \mathbb{D}) \cup (\mathbb{D} \times I_2) \cup (I_1 \times I_2)$ and all elements of $\mathcal{H}_{\phi}$ extend to be holomorphic. on $\Omega$. For $w \in \Omega$ wherever the expression makes sense, define a function $f$ by 
\[ f(w) = \frac{g_1(w)}{1-\bar{\tau}_2 w_2} = \frac{ - g_2(w)}{1-\bar{\tau}_1 w_1}.\]
The first formula says $f$ is holomorphic on $\Omega \setminus \{ (w_1,w_2) : w_2 =\tau_2\}$,  and the second formula says $f$ is holomorphic on $\Omega \setminus \{ (w_1,w_2) : w_1 =\tau_1\}$. This implies that $f$ is holomorphic on $\Omega \setminus \{ \tau\}$. Holomorphic functions on open sets in $\mathbb{C}^2$ cannot have isolated singularities and so, $f$ must be holomorphic on $\Omega.$   To show that $f \in H^2(\mathbb{D}^2)$, choose compact sets $\K_1, \K_2 \subseteq \mathbb{T}$ containing $\tau_1$ and $\tau_2$ respectively such that $\K_1 \times \K_2 \subset I_1 \times I_2$ and  $\text{dist}(  \T \setminus \mathcal{K}_j, \tau_j)> 0$ for $j=1,2$. Then since $\K_1 \times \K_2  \subset \Omega$, $f$ is bounded on $\K_1 \times \K_2$ and we have: 
\[ 
\begin{aligned} 
\| f\|_{H^2}^2 &= \int_{\K_1 \times \K_2} |f(z)|^2  d \sigma(z)  + \int_{\T \times (\T \setminus \K_2)}  \left | \frac{g_1(z)}{1-\bar{\tau}_2 z_2}\right|^2  d \sigma(z)   \\
&+ \int_{(\T \setminus \K_1)\times \T} \left |\frac{g_2(z)}{1-\bar{\tau}_1 z_1} \right|^2 d\sigma(z)    < \infty.
\end{aligned}
\]
Furthermore, observe that for each $N \in \mathbb{N}$,
\[ f(w) =  \frac{g_1(w)}{1-\bar{\tau}_2 w_2}  = \sum_{n=0}^{N-1} g_1(w) \bar{\tau}^n_2 w^n_2 +  \bar{\tau}^N_2 w^N_2 \frac{g_1(w)}{1-\bar{\tau}_2 w_2} .\]
Since $g_1 \in \mathcal{H}(K^{\min}_2)$, we know $g_1 \perp_{H^2} w_2^n g_1$ for all $n >0$ and so the functions in the first sum are orthogonal to each other. This implies
\[ 
\begin{aligned} 
\| f\|_{H^2} &\ge \left \|  \sum_{n=0}^{N-1} g_1(w) \bar{\tau}^n_2 w^n_2 \right \|_{H^2}  -\left \| \bar{\tau}^N_2 w^N_2 \frac{g_1(w)}{1-\bar{\tau}_2 w_2} \right \|_{H^2} \\
&= \left( \sum_{n=0}^{N-1} \| w_2^n g_1(w) \|^2_{H^2}\right)^{1/2} - \| f\|_{H^2} \\
&= \sqrt{N} \| g_1 \|_{H^2} - \| f \|_{H^2}.
\end{aligned}
\]
Since this holds for all $N$, it follows that $g_1 \equiv 0$ and thus $g_2 \equiv 0$, which proves the claim.
\end{proof}

However, as discussed in the following remark, the proof showing that the $(\id-E_\tau D)$ operators are bounded below does not translate to this setting. 

\begin{rem} 
\label{rem:obstr} 
Numerous times, the proof of Theorem \ref{invertonI} uses the uniform boundedness of point evaluations delineated in \eqref{eqn:Cestimate}. For example, this is used to deduce that 
\begin{equation} \label{eqn:Goodbd} \int_{\T \times \K} |g^n_2(z)|^2d  \sigma(z) \lesssim \sigma(\K) \end{equation}
for $\K \subseteq I$ a small set containing $\tau_2$.

It is not clear how to obtain such bounds if $\phi$ only extends continuously to a more general product set $I_1 \times I_2$. To see how we might obtain this inequality using other means, recall that point evaluations on $\mathbb{D} \times I_2$ are bounded on $\mathcal{H}_{\phi}$. Then
\[  
\begin{aligned}
 \int_{\T \times \K} |g^n_2(z)|^2d  \sigma(z)   &= \int_{\T \times \K}  \lim_{r \nearrow 1} \left| \left \langle g^n_2, k^{\max}_{1,(rz_1, z_2)} \right \rangle_{H^2} \right|^2  d \sigma(z) \\
 & \le  \| g_2^n\|_{H^2}^2 
 \int_{\T \times \K}  \lim_{r \nearrow 1} \left \| k^{\max}_{1,(rz_1, z_2)} \right \|^2_{H^2} d \sigma(z). 
 \end{aligned}
 \] 
 Then \eqref{eqn:Goodbd} would follow if
 \[   \int_{\T \times \K}  \lim_{r \nearrow 1} \left \| k^{\max}_{1,(rz_1, z_2)} \right \|^2_{H^2} d \sigma(z) \lesssim \sigma(\K).\]
But, a straightforward computation using the model equation \eqref{modeleqn} and the one-variable Julia-Carath\'eodory theorem gives
\[  \lim_{r \nearrow 1} \left \| k^{\max}_{1,(rz_1, z_2)} \right \|^2_{H^2}  = 
 \lim_{r \nearrow 1} \frac{1-|\phi(rz_1, z_2)|^2}{1-r^2} \approx | \partial {z_1} \phi(z_1, z_2)|,\]
where $\partial {z_1} \phi(z_1,z_2)$ is the non-tangential derivative of $\phi(\cdot, z_2)$ at $z_1$, which is defined as long as $ \| k^{\max}_{1,(rz_1, z_2)} \|^2_{H^2}$ is bounded as $ r\nearrow 1$. Then the desired equality becomes 
\[   \int_{\T \times \K}  \lim_{r \nearrow 1} \left \| k^{\max}_{1,(rz_1, z_2)} \right \|^2_{H^2} d \sigma(z)  \approx 
 \int_{\T \times \K} |  \partial {z_1} \phi(z_1, z_2)| d \sigma(z) \lesssim \sigma(\K).\]
This uniform $H^1$ derivative bound certainly forces $\phi(\cdot, z_2)$ to be a finite Blaschke product for a.e.~$z_2 \in I_2$ and likely imposes even more stringent regularity conditions on $\phi$. Therefore, new techniques would be needed to show that  the $(\id-E_\tau D)$ operators are bounded below for $\phi$ that possess weaker regularity than that assumed in Theorem \ref{invertonI}.
\end{rem}

\section{Some algebraic identities} \label{algebra}
	In this section, we collect some algebraic identities satisfied by general noncommutative indeterminants which will allow us to convert between various
	representation formulae. 
	
	Algebraic realizations of noncommutative Schur-type functions are called Fornasini-Marchesini realizations, after the pioneering work in \cite{fm76}. We introduce a new algebraic version of the  usual noncommutative Herglotz realization (as in \cite{popescumajorant}), the so-called Herglotz-Nouveau formula. Finally, we refer to the noncommutative Nevanlinna realization \cite{ptd16, pascoeopsys, ptdroyal}.
	
	\subsection{The block $2$ by $2$ matrix inverse formula} \label{sec:inverseforms}
	In what follows, we will need formulas for inverses of block $2\times 2$ matrices with operator entries. For the ease of the reader, we include those here. Specifically, let $X$ be the following $2 \times 2$ block matrix with operator entries
\[X = \begin{bmatrix} Q & R \\
   S & V \end{bmatrix}.\]
Provided that certain related operators are invertible, this partition yields useful formulas for $X^{-1}$. For example, if $Q$ and $V -SQ^{-1}R$ are invertible, then so is $X$ and $X^{-1}$ is given by
   \[\begin{bmatrix} Q^{-1} +Q^{-1}R(V- SQ^{-1}R)^{-1} SQ^{-1} & -Q^{-1} R(V- SQ^{-1}R)^{-1} \\
   -(V- SQ^{-1}R)^{-1}SQ^{-1} & (V- SQ^{-1}R)^{-1} \end{bmatrix}.\]
Similarly, if $V$ and $Q -R V^{-1}S$ are invertible, then $X^{-1}$ exists and is given by the formula
\[ \begin{bmatrix} (Q -R V^{-1}S)^{-1}	 & - (Q -R V^{-1}S)^{-1}	 RV^{-1} \\
		-V^{-1}S( Q -R V^{-1}S)^{-1}	 & V^{-1} + V^{-1}S (Q -R V^{-1}S)^{-1}	 RV^{-1} \end{bmatrix}. \]
See for example, \cite[p. 18]{horjoh85}.		

	\subsection{Between Fornasini-Marchesini and Herglotz-Nouveau formulae}
		\begin{theorem}\label{SchurToHerglotz}
			Let $A, B, C, D, Z$ be operators taking various Hilbert spaces
			to other various Hilbert spaces such that the expression
			$$\Phi=A+B(1-ZD)^{-1}ZC$$ is well defined and
			$1-\Phi$ is invertible.
			Let
				$$\Theta = \frac{1+\Phi}{1-\Phi}.$$
			Then the expression $\left(\id - \bbm A & B \\ Z C & Z D \ebm \right)$ is an invertible operator and 
				\beq\label{newherg}
					\Theta = \bbm \id \\ 0 \ebm
					\ad  \left(\id - \bbm A & B \\ Z C & Z D \ebm \right)\inv \left(\id + \bbm A & B \\ Z C & Z D \ebm \right)  \bbm \id \\ 0 \ebm.
				\eeq		
		\end{theorem}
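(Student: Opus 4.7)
The plan is to prove this by a direct computation using the block matrix inverse formulas from Section \ref{sec:inverseforms}. Set
\[ M = \bbm A & B \\ ZC & ZD \ebm, \]
so that the goal is to show $\id - M$ is invertible and that $\Theta$ equals the $(1,1)$ block of $(\id-M)^{-1}(\id+M)$. Since the expression $\Phi = A + B(1-ZD)^{-1}ZC$ is assumed well-defined, the operator $1 - ZD$ is invertible, and so we are in the setting where the second Schur complement formula from Section \ref{sec:inverseforms} applies.

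Concretely, I would identify the blocks as $Q = \id - A$, $R = -B$, $S = -ZC$, $V = \id - ZD$, and compute the associated Schur complement
\[ Q - R V^{-1} S = (\id-A) - B(\id-ZD)^{-1}ZC = \id - \Phi, \]
which is invertible by hypothesis. This simultaneously establishes the invertibility of $\id - M$ asserted in the theorem and gives the closed form
\[ (\id - M)^{-1} = \bbm (\id-\Phi)^{-1} & (\id-\Phi)^{-1} B (\id-ZD)^{-1} \\ (\id-ZD)^{-1} ZC (\id-\Phi)^{-1} & (\id-ZD)^{-1} + (\id-ZD)^{-1} ZC (\id-\Phi)^{-1} B (\id-ZD)^{-1} \ebm. \]

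Next, I would multiply this matrix against $(\id + M)$ and extract only the $(1,1)$ block, which after some cancellation equals
\[ (\id-\Phi)^{-1}(\id + A) + (\id-\Phi)^{-1} B (\id-ZD)^{-1} ZC. \]
The last step is to recognize that $B(\id-ZD)^{-1}ZC = \Phi - A$ by the definition of $\Phi$, so the $(1,1)$ block collapses to $(\id-\Phi)^{-1}(\id + A + \Phi - A) = (\id-\Phi)^{-1}(\id+\Phi) = \Theta$, as desired.

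I don't anticipate a substantive obstacle here beyond careful bookkeeping with the block entries; the single observation that does the real work is the identification of the Schur complement $Q - RV^{-1}S$ with $\id - \Phi$, which both produces invertibility and hands over the top-left block of the inverse in a form tailor-made for the final simplification.
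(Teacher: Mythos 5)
Your proof is correct and follows essentially the same route as the paper: both identify the Schur complement $Q - RV^{-1}S = \id - \Phi$ in the block matrix $\id - M$ (with $V = \id - ZD$ invertible by hypothesis), use the second block-inverse formula from Section \ref{sec:inverseforms}, and then extract the $(1,1)$ block of $(\id-M)^{-1}(\id+M)$ to recover $(\id-\Phi)^{-1}(\id+\Phi)$. The only cosmetic difference is that you write out all four blocks of the inverse while the paper records just the top row (using $*$ placeholders for the entries it never needs).
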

		\begin{proof}
			 By hypothesis, $1 - ZD$ is invertible and  $(1 - \Phi)\inv$, which is the Schur complement of $1 - ZD$ in $\bbm \id - A & - B \\ -ZC & 1 - ZD\ebm$, exists as $\id - \Phi$ is invertible by assumption. This implies that $\id - \bbm A & B \\ ZC & ZD\ebm$ is an invertible operator. 
			
			Applying the inverse formula for a block two by two matrix, we get
			\[
				\bbm \id - A & -B \\ -Z C & \id - Z D \ebm\inv = \bbm S & SB(\id - ZD)\inv \\ * & * \ebm
			\]
			where 
			\beq 
				S = (\id - A - B(\id-DZ)\inv Z C)\inv = (\id - \Phi)\inv,
			\eeq  
			and $*$ denotes some quantity that will be immaterial to our calculation.
			On substitution into \ref{newherg}, we get
			\begin{align*}
			&\bbm \id \\ 0 \ebm\ad  \left(\id - \bbm A & B \\ Z C & Z D \ebm \right)\inv \left(\id + \bbm A & B \\ Z C & Z D \ebm \right)  \bbm \id \\ 0 \ebm \\
			= &\bbm \id \\ 0 \ebm\ad  \bbm S & SB(\id - ZD)\inv \\ * & * \ebm \bbm A + \id & B \\ Z C & Z D + \id \ebm \bbm \id \\ 0 \ebm \\ 
			= &\bbm S & SB(\id - ZD)\inv\ebm \bbm \id + A \\ ZC \ebm \\
			= &S(\id + A) + SB(\id-Z D)\inv Z C \\
			= &S(\id + A + B(\id -Z D)\inv Z C) \\
			= &\frac{\id + \Phi}{\id - \Phi},
			\end{align*}
			which proves the claim.
		\end{proof}

	\subsection{Between Herglotz-Nouveau and Nevanlinna formulae}
		\begin{theorem} \label{thm:Nform}
			Let 
				$U = \begin{bmatrix} A & B \\ C & D \end{bmatrix}$
			be a block operator such that $\id-U$ is invertible. Let $Z$ be an operator such that $\id-Z$ is invertible and 
			
				\beq
					\Theta = \bbm \id \\ 0 \ebm
					\ad  \left(\id - \bbm A & B \\ Z C & Z D \ebm \right)\inv \left(\id + \bbm A & B \\ Z C & Z D \ebm \right) \bbm \id \\ 0 \ebm
				\eeq
				is well defined.
			Let $W=i\frac{\id+Z}{\id-Z}$ so $Z = \frac{W-i}{W+i}$,
			let $$T = i (\id +U)(\id-U)^{-1} = \begin{bmatrix} T_{11} & T_{12}  \\ T_{21} & T_{22} \end{bmatrix},$$
			and let
				$\Psi = i\Theta.$			
			Then the expression $(W + T_{22})$ is an invertible operator and
				\[ \Psi =  T_{11} -T_{12} (W +T_{22})^{-1}T_{21}.\]
		\end{theorem}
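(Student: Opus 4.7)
The plan is to reduce the identity to a single Schur-complement calculation on one block operator, and then invert that operator by an explicit block factorization — mirroring the strategy used in the proof of Theorem \ref{SchurToHerglotz}.

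First, I would rewrite both sides in terms of $V := (\id - U)^{-1}$. Since $T = i(\id + U)(\id - U)^{-1} = 2i(\id - U)^{-1} - i\id$, the four blocks of $T$ satisfy $T_{ij} = 2iV_{ij}$ off-diagonal and $T_{ii} = 2iV_{ii} - i\id$ on-diagonal; in particular,
\[ W + T_{22} = 2i\bigl(V_{22} + \tfrac{W - i}{2i}\bigr). \]
On the $\Psi$ side, applying $\id + \tilde Z U = 2\id - (\id - \tilde Z U)$ (with $\tilde Z := \diag(\id, Z)$) inside the Herglotz-Nouveau formula, together with the Schur-complement computation $[(\id - \tilde Z U)^{-1}]_{11} = (\id - \Phi)^{-1}$ already established in the proof of Theorem \ref{SchurToHerglotz}, yields $\Psi = 2i(\id - \Phi)^{-1} - i\id$. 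The claim therefore reduces to the single identity
\[ V_{11} - V_{12}\bigl(V_{22} + \tfrac{W - i}{2i}\bigr)^{-1} V_{21} = (\id - \Phi)^{-1}, \]
i.e.\ to showing that the Schur complement of $\tilde V_{22} := V_{22} + \tfrac{W - i}{2i}$ in the block operator $\tilde V := V + \diag\bigl(0, \tfrac{W - i}{2i}\bigr)$ equals $(\id - \Phi)^{-1}$.

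Next, I would invert $\tilde V$ directly. Rewriting $\tilde V = \tfrac{1}{2i}\bigl[2i\id + \hat Y(\id - U)\bigr](\id - U)^{-1}$ with $\hat Y := \diag(0, W - i)$, a short calculation using the identity $W - i = Z(W + i)$ (so that $(W + i) - (W - i)D = (W + i)(\id - ZD)$) gives
\[ 2i\id + \hat Y(\id - U) = \bbm 2i\id & 0 \\ -(W - i)C & (W + i)(\id - ZD) \ebm, \]
which is block lower triangular. Both diagonal blocks are invertible: the first trivially, and the second because $\id - ZD$ is invertible by hypothesis and $W + i = 2i(\id - Z)^{-1}$ is invertible since $\id - Z$ is. Hence $\tilde V$ is invertible, and multiplying out $\tilde V^{-1} = 2i(\id - U)\bigl[2i\id + \hat Y(\id - U)\bigr]^{-1}$ and extracting the $(1,1)$ block produces
\[ [\tilde V^{-1}]_{11} = (\id - A) - B(\id - ZD)^{-1} Z C = \id - \Phi, \]
which is invertible by hypothesis.

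The proof finishes with the standard Schur-complement fact: whenever a block operator is invertible and the $(1,1)$ block of its inverse is invertible, the $(2,2)$ block of the original is automatically invertible, and its Schur complement equals the inverse of that $(1,1)$ block. Applying this to $\tilde V$ simultaneously yields the invertibility of $\tilde V_{22}$ — equivalently of $W + T_{22} = 2i\tilde V_{22}$ — and the equality $\tilde V_{11} - \tilde V_{12}\tilde V_{22}^{-1}\tilde V_{21} = (\id - \Phi)^{-1}$, which unravels into the claimed formula. The main obstacle is spotting the right factorization of $\tilde V$: the algebraic observation $(W + i) - (W - i)D = (W + i)(\id - ZD)$, coming from $Z = (W-i)(W+i)^{-1}$, is exactly what turns the otherwise opaque block $2i\id + \hat Y(\id - U)$ into lower-triangular form with invertible diagonal, after which everything is routine block-matrix bookkeeping.
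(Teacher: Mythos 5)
Your proposal contains a genuine gap. You write that ``$\id - ZD$ is invertible by hypothesis,'' but that is a hypothesis of Theorem~\ref{SchurToHerglotz}, not of Theorem~\ref{thm:Nform}. The latter assumes only that $\id - U$, $\id - Z$, and the full block operator $\id - \bbm A & B \\ ZC & ZD \ebm$ are invertible; none of this forces $\id - ZD$ to be invertible. For a concrete scalar example, take $A=0$, $B=C=1$, $D=2$, $Z=\tfrac12$: then $\id-U$ has determinant $-2$, $\id - Z = \tfrac12$, and $\id - \bbm A & B \\ ZC & ZD\ebm$ has determinant $-\tfrac12$, so all stated hypotheses hold — yet $\id - ZD = 0$. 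In that situation the bracketed operator $2i\id + \hat Y(\id-U)$ has a singular $(2,2)$ block, your $\tilde V$ equals $\bbm 1/2 & -1/2 \\ -1/2 & 1/2\ebm$ and is not invertible, and the Schur-complement-in-$\tilde V$ argument has nothing to complement; meanwhile the theorem's conclusion can be checked directly in this example and still holds ($\Psi = -i = T_{11} - T_{12}(W+T_{22})^{-1}T_{21}$). The same issue infects the earlier reduction: you identify $\Psi$ with $2i(\id-\Phi)^{-1} - i\id$ by citing the Schur-complement computation from Theorem~\ref{SchurToHerglotz}, but $\Phi$ itself is undefined when $\id - ZD$ fails to be invertible. (The harmless part of that step is the identity $\Psi = 2i\,[(\id - \tilde Z U)^{-1}]_{11} - i\id$, which follows from $\id + \tilde Z U = 2\id - (\id - \tilde Z U)$ alone and never mentions $\Phi$.)

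The paper's proof avoids this by never passing through $\Phi$ or through an inverse of your $\tilde V$. Instead it conjugates $(\id - \tilde Z U)^{-1}$ by invertible diagonal factors to exhibit the lower-triangular operator $\bbm \id & 0 \\ T_{21} & W+T_{22}\ebm$ as invertible, which directly yields the invertibility of $W+T_{22}$ without any assumption on $\id - ZD$, and then finishes by block-matrix bookkeeping. If you want to rescue your approach, you would need to replace the inversion of $\tilde V$ by an argument that isolates $\tilde V_{22}$ directly — essentially reproducing the conjugation the paper performs — rather than assuming the off-block $\id - ZD$ is invertible. As it stands, your argument proves the formula only under the stronger hypotheses of Theorem~\ref{SchurToHerglotz}, not under those actually stated in Theorem~\ref{thm:Nform}.
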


		\begin{proof} For ease of notation, set 
			$ \alpha = \begin{bmatrix} \id \\ 0 \end{bmatrix}.$ The formula for $\Theta$ gives
		\[ 
		\begin{aligned} 
		\Psi &= i \Theta = i \alpha^* \left( \id - \begin{bmatrix} \id & 0 \\
		0 &  \frac{W-i}{W+i} \end{bmatrix} \begin{bmatrix} A & B \\ C & D \end{bmatrix} \right)^{-1} \left( \id + \begin{bmatrix} \id & 0 \\
		0 &  \frac{W-i}{W+i} \end{bmatrix}  \begin{bmatrix} A & B \\ C & D \end{bmatrix} \right) \alpha \\
		& = i\alpha^* \left(  \begin{bmatrix} \id & 0 \\  0 & W+i \end{bmatrix}  - \begin{bmatrix} \id & 0 \\
		0 &  W-i \end{bmatrix} U \right)^{-1} \left( \begin{bmatrix} \id & 0 \\  0 & W+i \end{bmatrix} + \begin{bmatrix} \id & 0 \\
		0 &  W-i \end{bmatrix}  U \right) \alpha \\
		& =i \alpha^* \left(  \begin{bmatrix} \id & 0 \\  0 & W \end{bmatrix}(\id-U)  + \begin{bmatrix} 0 & 0 \\
		0 &  i \end{bmatrix} (\id+U)  \right)^{-1} \left( \begin{bmatrix} \id & 0 \\  0 & W \end{bmatrix}(\id+U)  + \begin{bmatrix} 0 & 0 \\
		0 &  i \end{bmatrix} (\id-U)\right) \alpha. 
		\end{aligned} \] 
		Recalling that $T = i (\id +U)(\id-U)^{-1}$, we have
		\[ 
		\begin{aligned} 
		\Psi = \alpha^* (\id-U)^{-1}  \left(  \begin{bmatrix} \id & 0 \\  0 & W \end{bmatrix}  + \begin{bmatrix} 0 & 0 \\
		0 &  \id \end{bmatrix} T  \right)^{-1} \left( \begin{bmatrix} \id & 0 \\  0 & W \end{bmatrix}T  + \begin{bmatrix} 0 & 0 \\
		0 &  -\id \end{bmatrix}\right) (\id-U) \alpha. 
		\end{aligned} \] 
		The expression $\left(\bbm \id & 0 \\ 0 & W \ebm + \bbm 0 & 0 \\ 0 & \id\ebm T\right)\inv$ is the conjugation of a well defined expression from the original equation by invertible operators, and thus remains well defined. Then note that
		\[
		\left(\bbm \id & 0 \\ 0 & W \ebm + \bbm 0 & 0 \\ 0 & \id\ebm T\right)\inv = \bbm \id & 0 \\ T_{21} & W + T_{22}\ebm\inv
		\]
		which implies that the expression $W + T_{22}$ is invertible.
		
		Now, writing 
		\[\begin{bmatrix} \id & 0 \\  0 & W \end{bmatrix}T  + \begin{bmatrix} 0 & 0 \\
		0 &  -\id \end{bmatrix} =  \left(  \begin{bmatrix} \id & 0 \\  0 & W \end{bmatrix} + \begin{bmatrix} 0 & 0 \\
		0 &  \id \end{bmatrix} T\right)T -   \begin{bmatrix} 0 & 0 \\  0 & \id \end{bmatrix} (\id +T^2),\] gives
		\[ 
		\begin{aligned}
		\Psi  &=  \alpha^* (\id-U)^{-1} T (\id-U) \alpha  \\&\hspace{.5in}-\alpha^* (\id-U)^{-1}  \left(  \begin{bmatrix} \id & 0 \\  0 & W \end{bmatrix}  + \begin{bmatrix} 0 & 0 \\
		0 &  \id \end{bmatrix} T  \right)^{-1}   \begin{bmatrix} 0 & 0 \\  0 & \id \end{bmatrix} (\id +T^2) (\id-U) \alpha \\
		& = \alpha^* T  \alpha  - \alpha^* (\id-U)^{-1}  \left(  \begin{bmatrix} \id & 0 \\  0 & W \end{bmatrix}  + \begin{bmatrix} 0 & 0 \\
		0 &  \id \end{bmatrix} T  \right)^{-1}   \begin{bmatrix} 0 & 0 \\  0 & \id \end{bmatrix} (\id +T^2) (\id-U) \alpha.  
		\end{aligned}
		\]
		Observe that 
		\[  (\id +T^2) (\id-U)  = ((\id-U)^2 - (\id +U)^2)(\id-U)^{-1}  = - 4 U (\id-U)^{-1}, \]
		and by the inversion formula for $2\times 2$ block operators, since $\id$ and $T_{22} +W$ are both invertible, we have
		\[   
		\begin{aligned} 
		\left(  \begin{bmatrix} \id & 0 \\  0 & W \end{bmatrix}  + \begin{bmatrix} 0 & 0 \\
		0 &  \id \end{bmatrix} T  \right)^{-1}  \begin{bmatrix} 0 & 0 \\  0 & \id \end{bmatrix} &= \begin{bmatrix} \id & 0 \\
		T_{21} & W + T_{22} \end{bmatrix}^{-1} \begin{bmatrix} 0 & 0 \\  0 & \id \end{bmatrix}  \\
		& = 
		\begin{bmatrix} \id & 0 \\  -(W+T_{22})^{-1}T_{21} & (W+T_{22})^{-1}  \end{bmatrix}  \begin{bmatrix} 0 & 0 \\  0 & \id \end{bmatrix}    \\
		& = \begin{bmatrix}  0 & 0 \\
		0 & (W +T_{22})^{-1} \end{bmatrix} = \begin{bmatrix} 0 \\ \id \end{bmatrix} (W +T_{22})^{-1} \begin{bmatrix} 0 & \id \end{bmatrix}.
		\end{aligned}
		\]
		Then, using the definition of $\alpha$, the equation simplifies to
		\[ \Psi = T_{11} +4  \begin{bmatrix} \id & 0 \end{bmatrix} (\id-U)^{-1} \begin{bmatrix} 0 \\ \id \end{bmatrix} (W +T_{22})^{-1} \begin{bmatrix} 0 & \id \end{bmatrix} U (\id-U)^{-1} \begin{bmatrix} \id \\ 0 \end{bmatrix} .\]
		Further, observe that 
		\[ 
		\begin{aligned}
		T&= i( \id-U +2U)(\id-U)^{-1}= i +2iU(\id-U)^{-1}, \ \text{ so }  \ 2 U(\id-U)^{-1} = -iT -\id, \\
		T&= i(U -\id +2)(\id-U)^{-1}= -i +2i(\id-U)^{-1}, \  \text{ so} \ 2 (\id-U)^{-1} = -iT +\id.
		\end{aligned}
		\]
		Those formulas imply that 
		\[ 2\begin{bmatrix} \id & 0 \end{bmatrix} (\id-U)^{-1} \begin{bmatrix} 0 \\ \id \end{bmatrix}  = -iT_{12} \ \text{ and } \ 2\begin{bmatrix} 0 & \id \end{bmatrix} U (\id-U)^{-1} \begin{bmatrix} \id \\ 0 \end{bmatrix} = -iT_{21}\]
		and so, the formula for $\Psi$ becomes
		\[ \Psi =  T_{11} -T_{12} (W +T_{22})^{-1}T_{21},\]
		which is what we were trying to show.
		\end{proof}

\section{Concrete Nevanlinna formulae} \label{concretenp}

We can use Theorem \ref{thm:Nform} to translate the concrete realizations for inner functions on $\mathbb{D}^2$ from Section \ref{bkyourway} to realizations for Pick functions on $\Pi^2$. 

First, assume that $\phi$ is an inner function on $\mathbb{D}^2$. 
Then $\phi$ has a model representation as in \eqref{eqn:AD1}, where the realization operator
\[
U := \bbm A & B \\ C & D \ebm : \bbm \C \\ \hilbert_\phi \ebm \mapsto \bbm \C \\ \hilbert_\phi \ebm
\]
has block formulas given in Theorem \ref{thm:AD}. Define 
\[ \alpha: \mathbb{D}\rightarrow\Pi \text{ by } \alpha(z) = i\frac{1+z}{1-z} \text{ and }  \alpha^{-1}:\Pi \rightarrow \mathbb{D} \text{ by } \alpha^{-1}(w) = \frac{ w-i}{w+i}.\]
Then $\psi= \alpha \circ \phi \circ \alpha\inv$ is an inner Pick function on $\Pi^2$, where inner means that $\psi$ is real valued for almost every point in $\R^2$. Here, we should mention that the notation $ \phi \circ \alpha\inv$ is short-hand for $ \phi \circ (\alpha\inv,\alpha\inv)$ and will be used throughout the rest of the paper.

If $(\id-U)$ is invertible, then Theorem \ref{thm:Nform} implies that if
 $$T = i (\id +U)(\id-U)^{-1} = \begin{bmatrix} T_{11} & T_{12}  \\ T_{21} & T_{22} \end{bmatrix}$$
on $\mathbb{C} \oplus \mathcal{H}_{\phi}$, then 		
\[ \psi (w) =  T_{11} -T_{12} (E_w +T_{22})^{-1}T_{21}\]
for $w \in \Pi^2$. If $\phi$ has sufficient regularity at $(1,1)$, then one can deduce the following explicit formulas for $T_{11}$, $T_{12}$, and $T_{21}$. Progress towards a similar description of $T_{22}$ is described in Remark \ref{rem:T22}.

\begin{theorem} \label{thm:nevanlinna} Let $\phi$ and $\psi$ be as above. Assume that $\phi$ extends continuously to a neighborhood of $(1,1)$ on $\mathbb{T}^2$ with $\phi(1,1) \ne 1$ and \eqref{eqn:AD1} extends to $(1,1)$. Then by Theorem 1.5 in \cite{bk}, $f(1,1)$ exists for all $f \in \mathcal{H}_{\phi}$ and there is a $k_1\in \mathcal{H}_{\phi}$ such that $f(1,1) = \langle f, k_1\rangle_{\mathcal{H}_{\phi}}$ for all $f \in \hilbert_\phi$. Then:
\begin{itemize}
\item[i.] For all $x \in \mathbb{C}$, $T_{11}$ is given by
\[ T_{11} x = i\frac{1+\phi(1,1)}{1-\phi(1,1)} x. \]
\item[ii.] For all $x \in \mathbb{C}$, $T_{21}$ is given by
\[ T_{21} x = \frac{2i \phi(1,1)}{1-\phi(1,1)} k_1x.\]
\item[iii.] For all $f  \in \mathcal{H}_{\phi}$, $T_{12}$ is given by
\[ T_{12} f = \frac{2 i}{1-\phi(1,1)} f(1,1).\]
\end{itemize}
\end{theorem}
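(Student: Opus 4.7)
The plan is to expand $T = i(\id+U)(\id-U)\inv$ in block form and then interpret the resulting operator blocks using the lurking isometry that defines $U$. For the block expansion, I would apply the block $2\times 2$ inverse formula from Section \ref{sec:inverseforms} with Schur complement of $\id - D$. Since $E_{(1,1)} = P_1 + P_2 = \id$ on $\Hphi$, the hypothesis that \eqref{eqn:AD1} extends to $(1,1)$ forces $\id - D$ to be invertible, and the Schur complement of $\id - D$ in $\id - U$ equals $\id - A - B(\id - D)\inv C = \id - \phi(1,1)$, which is invertible by the assumption $\phi(1,1) \neq 1$. Routine block multiplication should then produce
\[
T_{11} = i\,\frac{1+\phi(1,1)}{1-\phi(1,1)}, \quad T_{12} = \frac{2i}{1-\phi(1,1)}\, B(\id-D)\inv, \quad T_{21} = \frac{2i}{1-\phi(1,1)}\,(\id-D)\inv C.
\]
This establishes (i) immediately and reduces (ii) and (iii) to identifying $(\id-D)\inv C$ with $\phi(1,1)\,k_1$ and the functional $B(\id-D)\inv$ with $f \mapsto \langle f, k_1\rangle$, respectively.

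To make these identifications, I would return to the defining identity of the lurking isometry $V$,
\[
V \bbm 1 \\ \cc{w_1}\, k_{2,w}^{\min} \\ \cc{w_2}\, k_{1,w}^{\max} \ebm = \bbm \cc{\phi(w)} \\ k_{2,w}^{\min} \\ k_{1,w}^{\max} \ebm,
\]
and use that $V$ is isometric, so $U = V^*$ satisfies $UU^* = \id$. Writing $k_w = (k_{2,w}^{\min},\, k_{1,w}^{\max})^\top$ and letting $\cc{W}$ denote the operator that scales the two components of $k_w$ by $\cc{w_1}$ and $\cc{w_2}$, this yields both
\[
U \bbm \cc{\phi(w)} \\ k_w \ebm = \bbm 1 \\ \cc{W} k_w \ebm \quad\text{and}\quad U^* \bbm 1 \\ \cc{W} k_w \ebm = \bbm \cc{\phi(w)} \\ k_w \ebm.
\]
Passing to the limit $w \to (1,1)$---which is valid because by Theorem 1.5 of \cite{bk} (recalled in the hypothesis) the reproducing kernels $k_w$ converge in norm in $\Hphi$ to $k_1$ and $\cc{W} \to \id$---the second block row of each of these identities yields
\[
(\id - D)\, k_1 = C\,\cc{\phi(1,1)} \quad\text{and}\quad (\id - D^*)\, k_1 = B^* 1.
\]

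Combining these boundary identities with the block formulas for $T$ finishes the proof. Since $\phi$ is inner and extends continuously to a neighborhood of $(1,1)$ on $\T^2$, $|\phi(1,1)| = 1$, so $\cc{\phi(1,1)}\inv = \phi(1,1)$; thus the first identity rearranges to $(\id - D)\inv C = \phi(1,1)\,k_1$, which substituted into $T_{21}$ produces (ii). The second identity gives $(\id - D^*)\inv B^* 1 = k_1$, so that for every $f \in \Hphi$,
\[
B(\id-D)\inv f = \bigl\langle f,\, (\id-D^*)\inv B^* 1\bigr\rangle_{\Hphi} = \langle f, k_1\rangle_{\Hphi} = f(1,1),
\]
which when substituted into $T_{12}$ gives (iii). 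The main technical subtlety will be justifying the norm convergence $k_w \to k_1$ in $\Hphi$ as $w \to (1,1)$ (equivalently, joint continuity of $K_2^{\min}$ and $K_1^{\max}$ at $((1,1),(1,1))$); this is exactly what the regularity hypotheses at $(1,1)$, combined with Theorem 1.5 of \cite{bk}, are designed to supply.
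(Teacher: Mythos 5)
Your proof is correct and reaches the same block formulas for $T_{11}$, $T_{12}$, $T_{21}$ as the paper; for parts i.\ and ii.\ it proceeds essentially as the paper does (the paper cites Proposition~\ref{Dshift}, which you rederive from the lurking isometry, and both pass to the limit $w\to(1,1)$ to get $(\id-D)\inv C = \phi(1,1)k_1$). For part iii., however, your route is genuinely different and more direct. The paper first shows that $\{(1-E_{\cc{w}})k_w : w\in\mathbb{D}^2\}$ has dense span in $\Hphi$ (via the argument of Proposition~\ref{prop:range}), then verifies $T_{12}(1-E_{\cc{w}})k_w = \tfrac{2i}{1-\phi(1,1)}(1-E_{\cc{w}})k_w(1,1)$ by hand using Proposition~\ref{Dshift}, the explicit formula for $B$, and the model equation~\eqref{modeleqn}, and finally extends by density. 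You instead write $B(\id-D)\inv f = \langle f, (\id-D^*)\inv B^*1\rangle_{\Hphi}$ and identify $(\id-D^*)\inv B^*1 = k_1$ directly from the second block row of the forward lurking-isometry relation $V\bbm 1 \\ \cc{W}k_w\ebm = \bbm \cc{\phi(w)} \\ k_w\ebm$, taking the boundary limit $w\to(1,1)$; this buys a short, uniform argument for $T_{12}$ that avoids both the density step and the explicit model-equation calculation. One small correction: you flag norm convergence $k_w\to k_1$ as the main technical subtlety, but the paper's appeal to \cite[Theorem 1.5]{bk} yields only \emph{weak} convergence of $k_w$ to $k_1$ as $w\to(1,1)$ from inside the bidisk, and weak convergence is all your argument needs --- since $D^*$ is bounded and $\cc{W}\to\id$ in operator norm as $w\to(1,1)$, the identity $B^*1 + D^*\cc{W}k_w = k_w$ passes to the weak limit, giving $(\id-D^*)k_1 = B^*1$ directly. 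So the technical burden is lighter than you feared, and the argument is sound.
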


It follows from the proof that weaker regularity conditions are needed to obtain the formulas for $T_{11}$ and $T_{21}.$

\begin{proof} To obtain formulas for $T$, we require a formula for $(1-U)^{-1}$. Let $c_1:=(1-\phi(1,1))^{-1}.$ Using the second formula in Section \ref{sec:inverseforms} and the fact that $\phi(1,1) = A+B(\id-D)^{-1} C$, one can obtain
\[ (1-U)^{-1} = \begin{bmatrix}  c_1 & c_1 B(1-D)^{-1} \\
c_1(1-D)^{-1} C & (1-D)^{-1} + c_1(1-D)^{-1}CB(1-D)^{-1}  \end{bmatrix}. \]
Using $T = i(1+U)(1-U)^{-1}$, block matrix multiplication, and straightforward simplification gives
\[ \begin{aligned}
T_{11} &= i c_1(1 +A + B(1-D)^{-1} C )= i \tfrac{ 1 + \phi(1,1)}{1-\phi(1,1)}; \\
T_{21} & = ic_1 (C + (1+D)(1-D)^{-1} C) = \tfrac{2i}{1-\phi(1,1)}(1-D)^{-1} C; \\
T_{12} &= i (1+A)c_1B(1-D)^{-1} +i B(1-D)^{-1} + ic_1B(1-D)^{-1} CB(1-D)^{-1} \\
&=\tfrac{2i}{1-\phi(1,1)} B(1-D)^{-1}; \\
T_{22} &= i( 1+D)(1-D)^{-1} + c_1i\left( 1 + (1+D)(1-D)^{-1} \right) CB(1-D)^{-1} \\
&=  i( 1+D)(1-D)^{-1} +\tfrac{2i}{1-\phi(1,1)} (1-D)^{-1} CB (1-D)^{-1}.
\end{aligned}
\]
The formula for $T_{11}$ is immediate. To obtain the formula for $T_{21}$, set 
\[ k_w = k_{2,w}^{\min} + k_{1,w}^{\max}.\]
Then by Proposition \ref{Dshift}, 
\begin{equation} \label{eqn:Dkw} Dk_w = E_{\bar{w}} k_w -\overline{ \phi(w) } F,\end{equation}
where $F$ is a function defined in Proposition \ref{Dshift} and by Theorem \ref{thm:AD}, $Cx = Fx$ for all $x \in \mathbb{C}$. By Theorem 1.5 in \cite{bk}, as $w\rightarrow (1,1)$ with $w \in \mathbb{D}^2$, $k_w \rightarrow k_1$ weakly in $\mathcal{H}_\phi$. One can use this to show
\[ D k_1 = k_1 -\overline{\phi(1,1)} F, \ \text{ which implies } \ (1-D)^{-1} F = \phi(1,1) k_1.\]
It follows immediately that for $x \in \mathbb{C}$, 
\[ T_{21} x = 2ic_1(1-D)^{-1} C x = 2ic_1(1-D)^{-1} F x =\tfrac{2i \phi(1,1)}{1-\phi(1,1)}k_1 x.\]
To study $T_{12}$ and $T_{22}$, recall that we assumed $\phi$ continuously extends to a neighborhood of $(1,1)$. This implies that linear combinations of the functions  $(1-E_{\bar{w}})k_w$ are dense in $\mathcal{H}_{ \phi}$. To see this, assume that $g = g_1 + g_2 \in \mathcal{H}_{\phi}$ and for all $w \in \mathbb{D}^2$,
\[ 0=\langle g, (1-E_{\bar{w}})k_w \rangle_{\mathcal{H}_{\phi}} = (1-w_1) g_1(w) + (1-w_2)g_2(w).\]
Then the arguments in the proof of Proposition \ref{prop:range} imply that $g \equiv 0.$ Thus it suffices to find a linear formula for $T_{12}$ on functions of the form $(1-E_{\bar{w}})k_w$.

To that end, note that \eqref{eqn:Dkw} implies that 
\[ (1-E_{\bar{w}}) k_w = k_w - D k_w - \overline{\phi(w)} F.\]
Then by Theorem \ref{thm:AD} and the formula \eqref{modeleqn},
\[
\begin{aligned}
 T_{12} (1-E_{\bar{w}}) k_w  &= \tfrac{2i}{1-\phi(1,1)} B (1-D)^{-1} \left( k_w - D k_w - \overline{\phi(w)} F \right)  \\
 & = \tfrac{2i}{1-\phi(1,1)} B (k_w -\overline{\phi(w)} \phi(1,1) k_1) \\
 & = \tfrac{2i}{1-\phi(1,1)}\left( k_w(0)- \overline{\phi(w)}\phi(1,1) k_1(0)\right) \\
 & = \tfrac{2i}{1-\phi(1,1)}\left( (1-\overline{\phi(w)}\phi(0,0)) - \overline{\phi(w)}\phi(1,1) (1-\overline{\phi(1,1)}\phi(0,0))\right) \\
 & =\tfrac{2i}{1-\phi(1,1)}\left( 1-\overline{\phi(w)}\phi(1,1)\right) \\
 & = \tfrac{2i}{1-\phi(1,1)}(1-E_{\bar{w}}) k_w(1,1),
 \end{aligned}
 \] 
 which establishes the formula for $T_{12}$ and completes the proof of this theorem. Partial results concerning $T_{22}$ are given in Remark \ref{rem:T22}. 
\end{proof}

\begin{rem}\label{rem:T22} Recall from Theorem \ref{thm:nevanlinna} that 
\[ T_{22} =  i( 1+D)(1-D)^{-1} +\tfrac{2i}{1-\phi(1,1)} (1-D)^{-1} CB (1-D)^{-1}.\]
The second piece of $T_{22}$ combines the operators seen in $T_{21}$ and $T_{12}$. Thus, we can combine our formulas for those two operators as follows: for all $f \in \mathcal{H}_{\phi},$
\[ \tfrac{2i}{1-\phi(1,1)}  (1-D)^{-1} CB (1-D)^{-1} f =\tfrac{2i}{1-\phi(1,1)}   (1-D)^{-1} C f(1,1) = \tfrac{2i f(1,1)}{1-\phi(1,1)} k_1.\]
We have not been able to deduce an explicit formula for the first piece of $T_{22}$. However, we can compute it on the functions $(1-E_{\bar{w}})k_w$ and so, under the regularity assumptions of Theorem \ref{thm:nevanlinna}, know its behavior on a dense set in $\mathcal{H}_{\phi}$. Specifically, using Proposition \ref{Dshift},
\[ 
\begin{aligned}
 i( 1&+D)(1-D)^{-1} (1-E_{\bar{w}})k_w 
=  i( 1+D)(k_w -\overline{\phi(w)} \phi(1,1) k_1)  \\
& = i(k_w -\overline{\phi(w)}\phi(1,1) k_1 + E_{\bar{w}}k_w - \overline{\phi(w)}F -\overline{\phi(w)} \phi(1,1)(k_1 - \overline{\phi(1,1)} F )) \\
& = i (1 + E_{\bar{w}})k_w -2i\overline{\phi(w)}\phi(1,1) k_1.
\end{aligned}
\]
We have not been able to find a  bounded linear operator on $\mathcal{H}_{\phi}$ that gives this formula on the functions $(1-E_{\bar{w}})k_w$ and leave that as an open question.
\end{rem}
\section{The McCarthy Champagne conjecture} \label{mccarthy}
A large motivation for our present developments is the McCarthy Champagne Conjecture (MCC). A comprehensive discussion of the MCC was already provided in the introduction, but for the ease of the reader, let us recall the statement of the MCC here:

\begin{center} \textbf{(MCC): Every $d$-variable Pick-Agler function that analytically continues across an open convex set $E \subseteq \mathbb{R}^d$ is globally matrix monotone when restricted to $E$. } \end{center} 

Equivalently, this says that every locally matrix monotone function on an open convex set $E \subseteq \mathbb{R}^d$ is globally matrix monotone on $E$. In this section, we establish the MCC for two-variable Pick functions arising from quasi-rational functions and for $d$-variable perspective functions.

	\subsection{Quasi-rational functions} As in Section \ref{concretenp}, let 
		 $\alpha: \mathbb{D} \rightarrow \Pi$ denote the Cayley transform given by
		$\alpha(z) = i\left( \tfrac{1+z}{1-z} \right).$ Then  as a direct result of Theorem \ref{invertonI} combined with Theorem \ref{thm:Nform}, we can establish the MCC for Pick functions arising from quasi-rational functions.

		\begin{theorem}\label{mccarthy1} Let $I \subseteq \mathbb{T}$ be open,
		let  $\phi$ be a nonconstant two-variable quasi-rational function on $\mathbb{T} \times I$, and define a Pick function $f$ by $f = \alpha \circ \phi \circ
		\alpha^{-1}$. Then $f$ is globally matrix monotone on every open rectangle $E \subseteq \mathbb{R} \times \alpha(I)$ such that $\phi$ does not attain the value $1$ on $\alpha^{-1}(E).$ 
		\end{theorem}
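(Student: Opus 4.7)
The plan is to push the concrete Schur realization of $\phi$ from Section~\ref{bkyourway} through the algebraic transformations of Section~\ref{algebra} to obtain a Nevanlinna realization of $\psi := \alpha \circ \phi \circ \alpha^{-1}$ that remains well defined on $E$, and then to read off global matrix monotonicity directly from this representation. The boundary invertibility furnished by Theorem~\ref{invertonI} is what allows the Nevanlinna form to survive from $\Pi^2$ down to the real rectangle $E$.

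Concretely, I would begin with the realization $\phi(z) = A + B(\id - E_zD)^{-1}E_zC$ of Theorem~\ref{thm:AD}, with associated block operator $U$ on $\mathbb{C} \oplus \Hphi$. The hypothesis that $\phi$ does not attain the value $1$ on $\alpha^{-1}(E) \subseteq \mathbb{T} \times I$ ensures that $(1-\phi)^{-1}$ is defined there, so Theorem~\ref{SchurToHerglotz} produces a Herglotz-Nouveau realization of $\Theta = (1+\phi)/(1-\phi)$. A subsequent application of Theorem~\ref{thm:Nform}, combined with the diagonal Cayley identity $E_w = i(\id + E_z)(\id - E_z)^{-1}$ for $w = \alpha(z)$, then yields a Nevanlinna realization
$$f(w) = T_{11} - T_{12}(E_w + T_{22})^{-1}T_{21}.$$
The essential point is that invertibility of $E_w + T_{22}$ for $w \in E$ is Cayley-equivalent to invertibility of $\id - E_\tau D$ for $\tau \in \alpha^{-1}(E) \subseteq \mathbb{T} \times I$, which is exactly Theorem~\ref{invertonI}.

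With this representation in hand, I would deduce global matrix monotonicity as follows. Since $\phi$ is inner, the realization operator $U$ is unitary (or is dilated to one without affecting the identities on $E$), so $T = i(\id + U)(\id - U)^{-1}$ is self-adjoint and in particular $T_{12} = T_{21}^*$. For a pair of commuting self-adjoint tuples $A \le B$ with joint spectra in $E$, ampliate $E_w$ to $E_W := W_1 \otimes P_1 + W_2 \otimes P_2$ acting on $\mathbb{C}^n \otimes \Hphi$. Simultaneous diagonalization within each individual tuple shows that both $E_A + T_{22}$ and $E_B + T_{22}$ decompose as block-direct sums of operators of the form $E_\lambda + T_{22}$ with $\lambda \in E$, hence are invertible. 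The resolvent identity then gives
$$f(B) - f(A) = T_{21}^*(E_A + T_{22})^{-1}(E_B - E_A)(E_B + T_{22})^{-1}T_{21},$$
and the right side is nonnegative because $E_B - E_A \ge 0$ and the connectedness of $E$ pins down the spectral signature of $(E_\lambda + T_{22})^{-1}$ uniformly on $E$, legitimizing operator monotonicity of $x \mapsto -x^{-1}$.

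The main obstacle I expect is twofold. First, Theorems~\ref{SchurToHerglotz} and \ref{thm:Nform} require $1 - \phi$ and $\id - U$ respectively to be invertible, so a preliminary conformal adjustment of the base point may be needed to guarantee both conditions hold simultaneously near the points of $E$ rather than merely at $0 \in \mathbb{D}^2$. Second, the final operator-monotonicity step must accommodate the case in which $A$ and $B$ do not mutually commute; this is handled through a continuity argument along the convex family $(1-t)(E_A + T_{22}) + t(E_B + T_{22})$, where the rectangle structure of $E$ and the uniform boundary invertibility from Theorem~\ref{invertonI} together force the signature to remain constant along the path.
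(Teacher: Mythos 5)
Your high-level plan — push the coisometric Schur realization of $\phi$ through Theorems~\ref{SchurToHerglotz} and \ref{thm:Nform} to get a Nevanlinna form for $f$ whose resolvent survives down to $E$ by Theorem~\ref{invertonI} — matches the paper's approach. But the final monotonicity step has a genuine gap, and the two places you flag as "obstacles" are precisely where the paper does the real work.

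The crucial fact you never establish is that $T_{22}$ is \emph{positive semidefinite}, and you cannot establish it in the form you have set things up, because a general rectangle $E \subseteq \mathbb{R}\times\alpha(I)$ is not contained in the positive orthant. The paper fixes this first: it shrinks to an arbitrary rectangle $E'$ compactly contained in $E$, then precomposes $f$ with a pair of one-variable conformal self-maps $\beta$ of $\Pi$ chosen so that $\beta(E')\subseteq(0,\infty)^2$, $(\infty,\infty)\in\beta(E)$, and the corresponding point maps to a boundary point where $\phi$ is regular and $\ne 1$. After this normalization, $\Phi = \phi\circ\gamma$ is again quasi-rational, Theorem~\ref{invertonI} gives invertibility of $1-E_\tau D$ at $(1,1)$, one deduces $1-U$ is invertible, and — because $U$ is coisometric and $1-U$ invertible — the von Neumann--Wold decomposition forces $U$ to be \emph{unitary} (your parenthetical "or is dilated to one" would destroy the boundary invertibility and is not an option). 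Then $T$ is self-adjoint, and the invertibility of $T_{22}+E_w$ for $w\in(0,\infty)^2$ forces $T_{22}\ge 0$.

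Once $T_{22}\ge 0$, your resolvent identity for commuting tuples would indeed close, since $E_A+T_{22}$ and $E_B+T_{22}$ are then both strictly positive and $x\mapsto -x^{-1}$ is operator monotone on positive operators. Without positivity, however, the identity
\[
f(B)-f(A) = T_{21}^*(E_A+T_{22})^{-1}(E_B-E_A)(E_B+T_{22})^{-1}T_{21}
\]
is \emph{not} manifestly nonnegative: the middle factor is not self-adjoint (the two outer resolvents differ), and the symmetrized argument for monotonicity of $-x^{-1}$ uses a square root of one of the operators, hence positivity. "Constant spectral signature along $E$" is not enough; operator monotonicity of the inverse on indefinite but invertible operators fails in general, and on the infinite-dimensional space $\mathcal H_\phi$ the signature is not even a well-defined finite invariant. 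The paper sidesteps all of this and the noncommuting case in one stroke: since $T_{22}\ge 0$, the representation $F(w) = T_{11} - T_{12}(T_{22}+E_w)^{-1}T_{21}$ extends to \emph{all} pairs of positive matrices (commuting or not), and then the free/noncommutative L\"owner theorem of \cite{pastd13} gives global matrix monotonicity on $(0,\infty)^2$ directly, which pulls back to $E'$ via the one-variable monotonicity of the $\beta_j$.
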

		\begin{proof} Let $E' = J_1 \times J_2$ be a finite open rectangle with $\overline{E'} \subseteq E$. 
		Since $E'$ is arbitrary, it suffices to show $f$ is globally matrix monotone on $E'$. Let $\beta = (\beta_1, \beta_2)$ be a pair of conformal self
		maps of $\Pi$ such that 
		\[ \beta(E') \subseteq (0,\infty)^2 \text{ and }  (0,\infty)^2 \cup (\infty, \infty) \subseteq \beta (E)\]
		and  furthermore $f(\beta^{-1}(\infty, \infty)) \in \mathbb{R}.$ Define $F = f \circ \beta^{-1}$. Observe that each $\beta_j$ is a one variable matrix monotone function. Thus, to show  $f$ is globally matrix monotone on $E'$, we need only show that $F$ is globally matrix monotone on $(0,\infty)^2$. 
		
 To that end, observe that $F = \alpha \circ \Phi \circ \alpha^{-1}$, where $\Phi = \phi \circ \gamma$ and $\gamma = (\gamma_1, \gamma_2)$ is a pair of conformal self maps of $\mathbb{D}$ defined by $\gamma_j = \alpha^{-1} \circ \beta_j^{-1} \circ \alpha$. Then $\Phi$ is quasi-rational on $\mathbb{T} \times I'$, where $I' = \gamma_2^{-1}(I)$.  Tracing through the assumptions about $\phi$, $E$, and $\beta$ shows that $(1,1) \in \mathbb{T} \times I'$, $\Phi(1,1) \ne 1$, the set $\alpha^{-1}((0,\infty)^2) \subseteq \mathbb{T} \times I',$ and  $\Phi$ does not attain the value $1$ on $\alpha^{-1}((0,\infty)^2).$
		
Let $U=V^*$ be the coisometry from Theorem \ref{thm:AD} associated to $\Phi$ and defined in \eqref{eqn:Udef}. Then 
\begin{equation} \label{eqn:Phi} \Phi(z) = A + B(\id - E_z D)\inv E_z C
 \end{equation}
for $z \in \mathbb{D}^2$ and by Theorem \ref{invertonI}, $(\id - E_\tau D)\inv$ exists for all $\tau \in \mathbb{T} \times I'$. This implies that \eqref{eqn:Phi} extends to all $\tau \in  \mathbb{T} \times I'$, including $(1,1)$.
As $\Phi(1,1)\ne1$ and $(1-D)^{-1}$ exists, standard information about inverses for block $2\times 2$ operators, see Section \ref{sec:inverseforms}, implies that $\id-U$ is invertible. Since $U$ is a co-isometry and $1-U$ is invertible, the von Neumann-Wold decomposition implies that $U$ is unitary. 

Fix any $w \in \Pi^2 \cup (0, \infty)^2$, so that $w = \alpha(z)$ for some $z \in \mathbb{D}^2 \cup \alpha^{-1}((0,\infty)^2)$. Then we can apply Theorems \ref{SchurToHerglotz} and \ref{thm:Nform} with $Z = E_z$ and $W=E_w$ to conclude that 
\begin{equation} \label{eqn:FPF}  F(w) = T_{11} - T_{12}(T_{22}+E_w)^{-1}T_{21},\end{equation}
where $T=i(1+U)(1-U)^{-1}$. \color{black} Since $U$ is unitary, $T$ is self-adjoint and since $(T_{22}+E_w)^{-1}$ exists for $w\in (0, \infty)^2$, $T_{22}$ must be positive semidefinite. Observe that \eqref{eqn:FPF} has a natural extension to a map sending all pairs of matrix inputs with positive imaginary part to outputs with positive imaginary part, see for example \cite[Theorem 5.7]{pastd13}. Since $T_{22}$ is positive semidefinite, \eqref{eqn:FPF} extends further to all pairs of positive matrices as inputs for $w_1,w_2.$ As the cone of pairs of positive matrices is a free, convex set, the noncommutative L\"owner theorem, see \cite[Theorem 1.7]{pastd13} as well as \cite{pascoeopsys, palfia, ptdroyal}, implies that $F$ is globally matrix monotone on $(0,\infty)^2.$
\end{proof}

	\subsection{Perspective functions}
	
	Define a \dfn{commutative perspective function} $f$ to be a locally matrix monotone function on an open cone $C\subseteq (0,\infty)^d$
	such that $f(tz)=tf(z)$ when $t\in\mathbb{R}^+.$ {Perspective functions appear in the work of And\^o and Kubo in the context of monotone functions and operator means via L\"owner's theorem, and in the convex optimization regime in a series of papers by Effros, Hansen, and others. In particular, Effros and Hansen prove that convex non-commutative perspectives arise from convex commutative perspectives. See, e.g. \cite{andokubo80, effros09, ebadian2011, effroshansen13}.} 
	\begin{theorem}\label{mccarthy2}
		If $f$ is a commutative perspective function on an open cone $C\subseteq (0,\infty)^d$, then $f$ is globally matrix monotone on $(0, \infty)^d$.
	\end{theorem}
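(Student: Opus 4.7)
The plan is to combine Theorem \ref{commlow} with the noncommutative L\"owner theorem of \cite{pastd13}, using the homogeneity $f(tz)=tf(z)$ to bridge local matrix monotonicity on the cone $C$ and global matrix monotonicity on all of $(0,\infty)^d$. The conceptual role of homogeneity here will be analogous to that played by quasi-rationality and Theorem \ref{invertonI} in the proof of Theorem \ref{mccarthy1}: it is what forces the Pick-Agler realization to pass continuously from the upper half plane to the positive real axis.

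First, I would invoke Theorem \ref{commlow} to obtain an analytic continuation of $f$ to $C\cup\Pi^d$ as a Pick-Agler function. The degree-one homogeneity, which holds on $C$ by assumption, extends to all of $C\cup \Pi^d$ by applying the identity theorem to $z\mapsto f(tz)-tf(z)$ for each fixed $t\in\mathbb{R}^+$. Thus $f$ is a homogeneous degree-one Pick-Agler function on $\Pi^d$.

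Next, I would upgrade this to the non-commuting setting. Every Pick-Agler function admits a canonical noncommutative lifting to $d$-tuples of operators with strictly positive definite imaginary part, via the realization machinery from \cite{pptd16, ptd16} together with the Nevanlinna-type form appearing in Theorem \ref{thm:Nform}. The noncommutative realization of $f$ then takes the form $f(X)=T_{11}-T_{12}(E_X+T_{22})^{-1}T_{21}$ with $T$ self-adjoint. As in the proof of Theorem \ref{mccarthy1}, the fact that this expression is well defined on the commuting positive cone forces $T_{22}$ to be positive semidefinite; homogeneity then ensures that the formula is invariant under $X\mapsto tX$ for $t>0$ and extends to all (not necessarily commuting) tuples of strictly positive definite operators.

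Finally, the cone of $d$-tuples of positive definite operators is a free convex set, so the noncommutative L\"owner theorem \cite[Theorem 1.7]{pastd13} (cf.\ also \cite{pascoeopsys, palfia, ptdroyal}) yields that $f$ is globally matrix monotone on this free cone, and in particular on its commuting slice $(0,\infty)^d$, which is precisely the desired conclusion. The main obstacle is checking that the noncommutative Nevanlinna-type realization actually extends continuously through the positive cone boundary; once this is in hand, the noncommutative L\"owner theorem does the heavy lifting, and the surprising improvement from local monotonicity on a small cone $C$ to global monotonicity on all of $(0,\infty)^d$ is explained entirely by the rigidity that homogeneity imposes on the boundary behavior of the realization.
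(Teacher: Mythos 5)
Your high-level plan---pass to a Nevanlinna-type realization via Theorem \ref{commlow}, use homogeneity to push that realization through the boundary, and then invoke the noncommutative L\"owner theorem of \cite{pastd13}---is the right picture, but there is a genuine gap in the middle that you acknowledge but do not close. You write that ``the fact that this expression is well defined on the commuting positive cone forces $T_{22}$ to be positive semidefinite,'' but the well-definedness on the positive cone is precisely what has to be proved. In the proof of Theorem \ref{mccarthy1}, it followed from Theorem \ref{invertonI}, a boundary-regularity result that is specific to quasi-rational functions and has no counterpart in the perspective setting; here you cannot borrow it, and ``homogeneity then ensures that the formula \ldots extends'' is not a proof. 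Moreover, local matrix monotonicity is only assumed on the subcone $C$, so even commuting well-definedness on all of $(0,\infty)^d$ is part of what must be established, not a hypothesis.

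The paper's proof avoids this circularity by a different route, and it is worth seeing exactly where homogeneity earns its keep twice. After extending $f$ to $\Pi^d$ by Theorem \ref{commlow} and propagating $f(tz)=tf(z)$ to $\Pi^d$ by the identity theorem, one sets $H(z)=f(-1/z)$ and uses homogeneity to verify the carapoint estimate $\liminf_{y\to\infty} y\,|H(iy,\ldots,iy)| = |f(i,\ldots,i)| < \infty$. This unlocks Theorem 1.6 of \cite{aty13}, which produces an abstract Nevanlinna representation
\[
f(z)=\bigl\langle (A+\textstyle\sum_i z_i^{-1}Y_i)^{-1}\nu,\nu\bigr\rangle_{\mathcal H},
\]
with $A$ self-adjoint and each $Y_i\ge 0$ summing to $\id$. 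Homogeneity is then used a second time: $f(z)=tf(z/t)$ replaces $A$ by $A/t$ for every $t>0$, and letting $t\to\infty$ allows one to take $A=0$. It is this elimination of $A$---not merely positive semidefiniteness of a $T_{22}$-type block---that makes $(\sum_i z_i^{-1}Y_i)^{-1}$ manifestly well defined for all tuples of positive definite inputs, at which point \cite[Theorem 1.7]{pastd13} finishes. So the structural step your argument is missing is the passage through the specific carapoint/Nevanlinna form of \cite{aty13} and the second application of homogeneity to kill $A$; without it, the claimed extension to the positive cone is unjustified.
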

	\begin{proof}

	By the Theorem \ref{commlow} (the commutative L\"owner theorem), $f$ has an analytic continuation as a Pick-Agler function $f$ on the poly upper half plane $\Pi^d$. Since $C\subseteq \mathbb{R}^d$ is open, the identity theorem implies that this analytic continuation is unique on $\Pi^d$.  For any $t \in \R^+$, consider the Pick-Agler function $g(z) = t f(z/t)$. Because $f$ is positively homogenous on $C$,  $f=g$  on $C$ and by the uniqueness of the extension, $f=g$ on $\Pi^d$. Thus, $f$ is positively homogeneous on $\Pi^d$, which  immediately implies that the non-tangential value of $f$ at $0$ is $0$.

Now we show that $f$ has a useful Nevanlinna representation. To do so, we need to show that $f$ is sufficiently well behaved at $0$ (that is, $f$ has a carapoint at $0$ in the language of \cite{aty13}). Let $H(z) =f(-1/z)$. Then
	\begin{align*}
	&\phantom{=} \liminf_{y \to \infty} y \abs{H(iy, \ldots, iy)} \\
	&= \liminf_{y\to\infty} y \abs{f(i \tfrac{1}{y}, \ldots, i \tfrac{1}{y})} \\
	&= \liminf_{y \to \infty} \abs{f(i, \ldots, i)} \\
	&< \infty.
	\end{align*}
Given this, Theorem 1.6 in \cite{aty13} says that there must exist a Hilbert space $\hilbert$, a densely-defined self-adjoint operator $A$ on $\hilbert$, positive semidefinite contractions $Y_1, \dots, Y_d$ summing to $\id$ on $\mathcal{H}$, and a vector $\nu \in \mathcal{H}$ so that for all $z \in \Pi^d$, 
	\[
	H(z) = \ip{(A - \sum z_i Y_i)\inv \nu}{\nu}_{\hilbert}.
	\]
	Therefore, the same objects give a representation of $f$ by
	\[
	f(z) = \ip{(A + \sum z_i\inv Y_i)\inv \nu}{\nu}_{\hilbert}
	\]
	for all $z \in \Pi^d$.
	So, since $f(z) = tf(z/t)$,
		$$f(z)=\langle (A+\sum z_i^{-1}Y_i)^{-1}\nu,\nu \rangle_{\hilbert} = \langle (\tfrac{1}{t}A+\sum z_i^{-1}Y_i)^{-1}\nu,\nu\rangle_{\hilbert}.$$
By letting $t\rightarrow \infty,$  we can assume $A=0$. Then since $(\sum z_i^{-1}Y_i)^{-1}$ is well defined for all $d$ tuples of positive matrices, the noncommutative L\"owner theorem \cite{pastd13} implies that $f$ is globally matrix monotone on $(0, \infty)^d.$
	\end{proof}


\bibliography{references}
\bibliographystyle{plain}

\end{document}